\newif\ifPDF
\newtheorem{thm}{Theorem}[section]
\newtheorem{cor}[thm]{Corollary}
\newtheorem{lem}[thm]{Lemma}
\newtheorem{prop}[thm]{Proposition}
\theoremstyle{definition}
\newtheorem{defn}[thm]{Definition}
\theoremstyle{remark}
\newtheorem{rem}[thm]{Remark}
\numberwithin{equation}{section}
\newcommand{\norm}[1]{\left\Vert#1\right\Vert}
\newcommand{\abs}[1]{\left\vert#1\right\vert}
\newcommand{\Real}{\mathbb R}
\newcommand{\Int}{\mathbb Z}
\newcommand{\Comp}{\mathbb C}
\newcommand{\eps}{\varepsilon}
\newcommand{\F}{\mathcal{F}}
\newcommand{\Kzero}{\textrm{K}_0}
\begin{document}


\title{The C*-algebra of a minimal homeomorphism of zero mean dimension}

\author{George A. Elliott}
\address{Department of Mathematics, University of Toronto, Toronto, Ontario, Canada~\ M5S 2E4}
\email{elliott@math.toronto.edu}

\author{Zhuang Niu}
\address{Department of Mathematics, University of Wyoming, Laramie, WY, 82071, USA.}
\email{zniu@uwyo.edu}



\begin{abstract}
Let $X$ be an infinite compact metrizable space, and let $\sigma: X\to X$ be a minimal homeomorphism. Suppose that $(X, \sigma)$ has zero mean topological dimension. The associated C*-algebra $A=\mathrm{C}(X)\rtimes_\sigma\mathbb Z$ is shown to absorb the Jiang-Su algebra $\mathcal Z$ tensorially, i.e., $A\cong A\otimes\mathcal Z$. This implies that $A$ is classifiable when $(X, \sigma)$ is uniquely ergodic. 

Moreover, without any assumption on the mean dimension, it is shown that $A\otimes A$ always absorbs the Jiang-Su algebra.
\end{abstract}

\maketitle

\setcounter{tocdepth}{1}

\section{Introduction}

Recently, Toms and Winter proved that a simple C*-algebra arising from an action of the group $\Int$ of integers on a  metrizable compact space of finite dimension absorbs the Jiang-Su algebra $\mathcal Z$ (\cite{TW-Dym}, \cite{TW-Dym-1}). (This definitive result followed much earlier work, e.g., \cite{LP-Dym}.) As shown in \cite{GK-Dyn}, some condition on the dynamical system is necessary. Possibly, the condition of mean dimension zero, which we shall show is sufficient, is also necessary. (Phillips and Toms have conjectured that the mean dimension of a minimal dynamical system is always exactly twice the radius of comparison of the associated (crossed product) C*-algebra.
)

In the present note, we shall show that the condition of mean dimension zero (reviewed in Definition \ref{MD-def} below) is sufficient: it implies that the C*-algebra of the dynamical system absorbs the Jiang-Su C*-algebra $\mathcal Z$. 
More precisely, one has
\theoremstyle{plain}
\newtheorem*{thm-nocount}{Theorem}
\begin{thm-nocount}
Let $X$ be an infinite compact metrizable space, and let $\sigma: X\to X$ be a minimal homeomorphism. If $(X, \sigma)$ has mean dimension zero, then the crossed product C*-algebra $A=\mathrm{C}(X)\rtimes_{\sigma}\mathbb Z$ absorbs the Jiang-Su algebra $\mathcal Z$ tensorially.
\end{thm-nocount}

The same classification consequences as shown in \cite{TW-Dym} and \cite{TW-Dym-1} in the case that $\Kzero$ separates traces hold also in the present setting; in particular, the C*-algebra of any uniquely ergodic dynamical system is classifiable (in this case the mean dimension is automatically zero). 

Together with recent results in the classification of C*-algebras (see \cite{ENST-ASH}, \cite{GLN-TAS}, \cite{Lin-Dyn}, \cite{EGLN-ASH}, \cite{EN-K0-Z} and \cite{EGLN-Dr}), this theorem implies that the C*-algebra of a minimal dynamical system with mean dimension zero is always classifiable. In particular, this includes all the dynamical systems with finite entropy or the systems with countably many ergodic measures (\cite{Lindenstrauss-Weiss-MD}, \cite{Lind-MD}).

The proof of the main theorem also shows that the tensor product of the C*-algebras of two arbitrary minimal homeomorphisms (without any assumption on the mean dimension) is, perhaps surprisingly, always Jiang-Su stable:
\begin{thm-nocount}
Let $(X_1, \sigma_1)$ and $(X_2, \sigma_2)$ be minimal dynamical systems, where $X_1$ and $X_2$ are infinite compact metrizable spaces. Consider the C*-algebras $$A_1=\mathrm{C}(X_1)\rtimes_{\sigma_1}\Int \quad \textrm{and}\quad A_{2}= \mathrm{C}(X_2)\rtimes_{\sigma_2}\Int.$$
Then $$A_{1}\otimes A_{2}\cong (A_{1}\otimes A_{2})\otimes\mathcal Z.$$
\end{thm-nocount}

Again, with the recent classification results referred to above, this implies that $A_1\otimes A_2$ is always a classifiable C*-algebra.


\section{Mean topological dimension and the small boundary property}
Let $X$ be a compact metrizable space, and let $\sigma: X\to X$ be a homeomorphism. (These objects will be fixed throughout the paper.)
\begin{defn}[\cite{Lindenstrauss-Weiss-MD}]\label{MD-def}
The mean (topological) dimension of $(X, \sigma)$, denoted by $\mathrm{mdim}(X, \sigma)$,  is defined by
$$\mathrm{mdim}(X, \sigma)=\sup_{\alpha}\lim_{N\to\infty}\frac{1}{N}\mathcal{D}(\alpha\vee\sigma(\alpha)\vee\cdots\vee\sigma^{N-1}(\alpha)),$$
where the supremum is taken over arbitrary finite open covers $\alpha$, and the dimension of a finite open cover $\beta$, $\mathcal D(\beta)$, is the number $\min\{\mathrm{ord}(\beta');\ \beta'\preceq \beta\}$. (By the order of a cover $\beta$ is meant the number $\mathrm{ord}(\beta)=-1+\sup_x\sum_{U\in\beta} \chi_U(x)$, and by the join $\beta_1\vee\beta_2$ of two covers $\beta_1$ and $\beta_2$ is meant the cover $\{U_1\cap U_2: U_1\in\beta_1, U_2\in\beta_2\}$.)
\end{defn}

\begin{defn}[\cite{Lindenstrauss-Weiss-MD}]
For each set $E\subseteq X$, the orbit capacity of $E$, denoted by $\mathrm{ocap}(E)$, is defined to be
$$\mathrm{ocap}(E)=\lim_{N\to\infty}\frac{1}{N}\sup \{\chi_{E}(x)+\chi_{E}(\sigma(x))+\cdots+ \chi_{E}(\sigma^{N-1}(x));\ x\in X\}.$$
The system $(X, \sigma)$ is said to have the small boundary property (SBP) if for any $x\in X$ and any open neighbourhood $U$ of $x$, there is a neighbourhood $V$ in $U$ such that $\mathrm{ocap}(\partial V)=0$.
\end{defn}

\begin{thm}[Theorem 5.4 of \cite{Lindenstrauss-Weiss-MD} and Theorem 6.2 of \cite{Lind-MD}]
If $\sigma$ is minimal, then $(X, \sigma)$ has zero mean dimension if and only if it has the small boundary property.
\end{thm}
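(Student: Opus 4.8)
The plan is to establish the two implications separately; only the second, ``mean dimension zero $\Rightarrow$ small boundary property'', will use minimality of $\sigma$. One fact I would use repeatedly: orbit capacity is $\sigma$-invariant and finitely subadditive, since $\chi_{E_1\cup E_2}\le\chi_{E_1}+\chi_{E_2}$ and $\sup_x(f(x)+g(x))\le\sup_x f(x)+\sup_x g(x)$ give $\mathrm{ocap}(E_1\cup E_2)\le\mathrm{ocap}(E_1)+\mathrm{ocap}(E_2)$; in particular a finite union of sets of orbit capacity zero again has orbit capacity zero.

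\emph{Small boundary property $\Rightarrow$ mean dimension zero (following Lindenstrauss--Weiss).} It suffices to show $\lim_N\frac1N\mathcal D(\alpha\vee\sigma(\alpha)\vee\cdots\vee\sigma^{N-1}(\alpha))=0$ for an arbitrary finite open cover $\alpha=\{U_1,\dots,U_k\}$. First I would refine $\alpha$ using the SBP: for each $x\in X$ pick $i$ with $x\in U_i$ and an open $V_x$ with $x\in V_x\subseteq U_i$ and $\mathrm{ocap}(\partial V_x)=0$; by compactness finitely many of the $V_x$ form a cover $\beta$ refining $\alpha$, and the closed set $E:=\bigcup_{V\in\beta}\partial V$ then has $\mathrm{ocap}(E)=0$. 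Writing $\beta_N=\beta\vee\cdots\vee\sigma^{N-1}(\beta)$ and $\alpha_N=\alpha\vee\cdots\vee\sigma^{N-1}(\alpha)$: each cell of $\beta_N$ lies inside a cell of $\alpha_N$, so $\mathcal D(\alpha_N)\le\mathcal D(\beta_N)$. Now I invoke the combinatorial estimate of Lindenstrauss--Weiss: the cells of $\beta_N$ have boundaries inside $\bigcup_{n=0}^{N-1}\sigma^n(E)$ (because $\partial(\sigma^n V)=\sigma^n(\partial V)$ and $\partial(A\cap B)\subseteq\partial A\cup\partial B$), off which $\beta_N$ is locally a cover by relatively clopen sets and hence refinable so that its order near a point $x$ is governed only by the ``ambiguous'' coordinates $n$ with $\sigma^n x\in E$; carrying this out yields
$$\mathcal D(\beta_N)\ \le\ 1+\sup_{x\in X}\#\{\,0\le n\le N-1:\ \sigma^n x\in E\,\}.$$
Dividing by $N$ and letting $N\to\infty$, the right-hand side converges to $\mathrm{ocap}(E)=0$, so $\lim_N\frac1N\mathcal D(\alpha_N)=0$, whence $\mathrm{mdim}(X,\sigma)=0$.

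\emph{Mean dimension zero $\Rightarrow$ small boundary property (following Lindenstrauss).} This is the deep direction, and the only one needing minimality. Given $x_0\in X$ and an open $U\ni x_0$, I must produce an open $V$ with $x_0\in V\subseteq U$ and $\mathrm{ocap}(\partial V)=0$. The plan I would follow is Lindenstrauss's inductive construction: build a sequence of ``open Kakutani--Rokhlin towers'' $\{B_m,\sigma(B_m),\dots,\sigma^{h_m-1}(B_m)\}$ whose levels are disjoint with closures covering $X$, with heights $h_m\to\infty$, with each tower refining the previous one and its base adapted to a prescribed shrinking sequence of open covers (so that in the limit every point acquires a neighbourhood basis of sets assembled from tower levels), and --- the essential requirement --- with $\mathrm{ocap}$ of the accumulated boundary $\bigcup_i\partial(\sigma^i B_m)$ kept small and, along the sequence, tending to $0$. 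The engine of the inductive step is that, since $\mathrm{mdim}(X,\sigma)=0$, for any prescribed scale there is a finite open cover $\alpha^{(m)}$ finer than that scale with $\frac1N\mathcal D(\alpha^{(m)}\vee\cdots\vee\sigma^{N-1}(\alpha^{(m)}))\to 0$; combining such a cover with the existence of arbitrarily tall open Rokhlin towers for the aperiodic minimal system $(X,\sigma)$, one carves the next, finer tower out of the previous one, the sub-linear growth of $\mathcal D$ along orbits being exactly what forces the new boundary contribution to have small orbit capacity. A diagonal argument over $m$ then delivers the required $V$.

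The step I expect to be the main obstacle is precisely this inductive construction in the second implication: one must simultaneously refine the tower, let the heights grow, have the bases separate points, and keep the orbit capacities of the successive boundaries summable --- and it is the survival of this last property through each refinement that genuinely requires $\mathrm{mdim}(X,\sigma)=0$ rather than mere finiteness of the mean dimension. By comparison the first implication is soft, once the Lindenstrauss--Weiss counting lemma is granted.
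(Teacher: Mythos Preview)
The paper does not prove this theorem; it simply quotes the result from \cite{Lindenstrauss-Weiss-MD} and \cite{Lind-MD} and uses it as a black box (only the consequence recorded as Proposition~\ref{small-intsct} is actually invoked later). So there is no ``paper's own proof'' to compare against.

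That said, your outline tracks the arguments in the cited sources reasonably well. For the easy direction (SBP $\Rightarrow$ $\mathrm{mdim}=0$) your sketch is essentially the Lindenstrauss--Weiss argument; the displayed inequality $\mathcal D(\beta_N)\le 1+\sup_x\#\{n:\sigma^n x\in E\}$ is the heart of their Theorem~5.4, and your reduction to a cover $\beta$ with $\mathrm{ocap}\bigl(\bigcup_{V\in\beta}\partial V\bigr)=0$ is exactly right. One small wording issue: you write ``$\mathcal D(\alpha_N)\le\mathcal D(\beta_N)$ since each cell of $\beta_N$ lies inside a cell of $\alpha_N$''---that monotonicity is correct, but the reason is that every refinement of $\beta_N$ is automatically a refinement of $\alpha_N$, so the minimum defining $\mathcal D$ can only go down.

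For the hard direction ($\mathrm{mdim}=0$ $\Rightarrow$ SBP under minimality), your sketch is in the right spirit but remains a narrative rather than a proof. Lindenstrauss's actual argument (Theorem~6.2 of \cite{Lind-MD}) does hinge on minimality through the existence of arbitrarily tall Rokhlin towers with open levels, and does use $\mathrm{mdim}=0$ to produce, at each stage, covers $\alpha^{(m)}$ with $\mathcal D(\alpha^{(m)}_N)=o(N)$. But the passage from ``sublinear growth of $\mathcal D$ along orbits'' to ``the new boundary contribution has small orbit capacity'' is the genuine technical content, and you have not indicated how it goes; in Lindenstrauss's paper this step occupies several pages and uses a careful perturbation of a map into a simplicial complex to push most of the orbit off the boundary. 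If you intend this as more than a citation, that is the place where real work would be required.
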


\begin{prop}[Proposition 5.3 of \cite{Lindenstrauss-Weiss-MD}]\label{small-intsct}
If $(X,\sigma)$ has the SBP, then for every finite open cover $\alpha$ of $X$ and every $\eps>0$, there is a partition of unity $\phi_j: X\to [0, 1]$ ($j=1, ..., \abs{\alpha}$) subordinate to $\alpha$
such that
$$\mathrm{ocap}(\bigcup_{j=1}^{\abs{\alpha}}\phi_j^{-1}((0, 1)))<\eps.$$
\end{prop}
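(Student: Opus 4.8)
The plan is to arrange the partition of unity so that it is equal to $0$ or $1$ outside a fixed open set of small orbit capacity; the region on which any $\phi_j$ takes a value in $(0,1)$ will then have small orbit capacity automatically. First I would use the SBP, together with compactness and normality of $X$, to replace $\alpha$ by a convenient finite open cover. Fix a shrinking $\set{U_r'}_{r=1}^p$ of $\alpha=\set{U_r}_{r=1}^p$, so $\overline{U_r'}\subseteq U_r$ and $\set{U_r'}$ still covers $X$. For each $x\in X$ choose $r$ with $x\in U_r'$; by the SBP (applied once more, with the help of normality, to push the closure inside $U_r'$) there is an open neighbourhood $V\ni x$ with $\overline V\subseteq U_r'$ and $\mathrm{ocap}(\partial V)=0$. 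By compactness finitely many such sets $V_1,\dots,V_m$ cover $X$; fix $r(i)$ with $\overline{V_i}\subseteq U_{r(i)}'$ and put $B=\bigcup_i\partial V_i$. Since $\chi_{E\cup F}\le\chi_E+\chi_F$, orbit capacity is finitely subadditive on closed sets, so $\mathrm{ocap}(B)=0$.

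The step I expect to be the crux is that $B$ must be thickened slightly: since a finite cover of a (typically connected) space by proper open sets has unavoidable substantial overlaps, the transition region of a subordinate partition of unity cannot be confined to the nowhere dense set $B$ itself, only to a neighbourhood of it, and one needs such a neighbourhood to have small orbit capacity. I would establish that for any closed set $B$ one has $\mathrm{ocap}(B)=\inf\set{\mathrm{ocap}(B'):B'\supseteq B\ \text{open}}$. For fixed $N$ and closed $F$ the function $x\mapsto\sum_{n=0}^{N-1}\chi_{F}(\sigma^n x)$ is upper semicontinuous and hence attains its maximum, so along the closed sets $\overline{B_k}$, where $B_k:=\set{x:d(x,B)<1/k}$, the maxima $\sup_x\sum_{n<N}\chi_{\overline{B_k}}(\sigma^n x)$ decrease to $\sup_x\sum_{n<N}\chi_{B}(\sigma^n x)$; being non-negative integers, they agree for $k$ large. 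Since the limit defining $\mathrm{ocap}$ is an infimum, we may first pick $N$ with $\tfrac1N\sup_x\sum_{n<N}\chi_B(\sigma^n x)<\eps$ (here in fact $=0$); then $\mathrm{ocap}(B_k)<\eps$ for $k$ large. Fix such an open set $B'\supseteq B$ with $\mathrm{ocap}(B')<\eps$.

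Next I would extract a clopen structure away from $B$. Because $\partial V_i\subseteq B$, each $V_i\cap(X\setminus B)$ is clopen in the open subspace $X\setminus B$, hence so is each $P_i:=\big(V_i\setminus\bigcup_{j<i}V_j\big)\cap(X\setminus B)$; thus the $P_i$ are pairwise disjoint, open in $X$ (as $X\setminus B$ is open in $X$), contained in $V_i$, and cover $X\setminus B$, and moreover $\overline{P_i}\cap\overline{P_j}\subseteq B$ for $i\ne j$. Put $K_i=\overline{P_i}\setminus B'$. Each $K_i$ is closed with $K_i\subseteq\overline{P_i}\subseteq\overline{V_i}\subseteq U_{r(i)}'$, the $K_i$ are pairwise disjoint (since $\overline{P_i}\cap\overline{P_j}\subseteq B\subseteq B'$), and $\bigcup_i K_i\supseteq\big(\bigcup_i P_i\big)\setminus B'=X\setminus B'$, so $X\setminus\bigcup_i K_i\subseteq B'$.

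Finally I would assemble the partition of unity by interpolation. For $r=1,\dots,p$ set $L_r=\bigcup\set{K_i:r(i)=r}$ and $C_r=\bigcup\set{K_i:r(i)\ne r}\cup(X\setminus U_r')$; these are closed and disjoint, and $L_r\subseteq U_r'$, so by Urysohn's lemma there is a continuous $\psi_r:X\to[0,1]$ with $\psi_r\equiv1$ on $L_r$, $\psi_r\equiv0$ on $C_r$, and $\psi_r>0$ off $C_r$ (e.g.\ $\psi_r=d(\cdot,C_r)/(d(\cdot,C_r)+d(\cdot,L_r))$, modified suitably when $L_r=\emptyset$). Then $\sum_r\psi_r>0$ on $X$: a point of some $K_i$ has $\psi_{r(i)}=1$, while a point of $X\setminus\bigcup_i K_i$ lies in some $U_{r_0}'$ and outside $C_{r_0}$. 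Put $\phi_r=\psi_r/\sum_s\psi_s$. Then $\set{\phi_r}_{r=1}^p$ is a partition of unity into $\abs\alpha$ functions with $\mathrm{supp}\,\phi_r\subseteq\overline{U_r'}\subseteq U_r$, so it is subordinate to $\alpha$. Since $\phi_r(x)=1$ for some $r$ exactly when precisely one $\psi_s(x)$ is nonzero, and since $x\in K_i$ forces $\psi_{r(i)}(x)=1$ and $\psi_s(x)=0$ for all $s\ne r(i)$ (because $K_i\subseteq C_s$ for such $s$), every point of $\bigcup_i K_i$ lies in some $\phi_r^{-1}(1)$. Hence $\bigcup_{r}\phi_r^{-1}((0,1))\subseteq X\setminus\bigcup_i K_i\subseteq B'$, and therefore $\mathrm{ocap}\big(\bigcup_r\phi_r^{-1}((0,1))\big)\le\mathrm{ocap}(B')<\eps$, as required.
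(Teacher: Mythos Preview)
The paper does not supply its own proof of this proposition; it is quoted from Lindenstrauss--Weiss without argument. Your proof is correct and follows the natural route: refine $\alpha$ via the SBP to a finite cover $\{V_i\}$ with $\mathrm{ocap}(\partial V_i)=0$, use outer regularity of orbit capacity on closed sets to thicken $B=\bigcup_i\partial V_i$ to an open $B'$ of capacity $<\eps$, and then build a partition of unity that is $\{0,1\}$-valued off $B'$. This is essentially the argument in Lindenstrauss--Weiss (the outer-regularity step is their Lemma~3.3, and the refinement-by-small-boundary-sets is the content of their Proposition~5.3 proper), though you have spelled out the Urysohn construction of the $\phi_r$ in more detail than they do.

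One small slip: the parenthetical ``here in fact $=0$'' is not literally correct, since for nonempty closed $B$ one has $\tfrac1N\sup_x\sum_{n<N}\chi_B(\sigma^n x)\ge 1/N>0$ for every $N$; you only need it to be $<\eps$, which of course follows from $\mathrm{ocap}(B)=0$ and subadditivity. This does not affect the argument.
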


\section{The C*-algebra of a homeomorphism and its large subalgebras}\label{Strut-AY}
Suppose that $X$ as above is an infinite set and $\sigma$ as above is minimal.  Let us denote by $\sigma$ also the automorphism of $\mathrm{C}(X)$ defined by $$\sigma(f)=f\circ\sigma^{-1},\quad f\in \mathrm{C}(X).$$ 
Consider the crossed product C*-algebra
$$A=\mathrm{C}(X)\rtimes_\sigma\Int=\textrm{C*}\{f, u;\ ufu^*=\sigma(f),\ f\in \mathrm{C}(X)\}.$$
The C*-algebra $A$ is nuclear. Since $\sigma$ is minimal, the C*-algebra $A$ is simple.
Fix $y\in X$, and consider the Putnam algebra (\cite{Put-PJM}) 
$$A_y=\textrm{C*}\{f, ug;\ f, g\in\mathrm{C}(X),\ g(y)=0\}\subseteq A.$$
Let $Y$ be a closed neighbourhood of $y$ in $X$. Consider the sub-C*-algebra
$$A_Y=\textrm{C*}\{f, ug;\ f, g\in\mathrm{C}(X),\ g|_Y=0\}\subseteq A_y.$$
It follows from the definition that $A_{Y_1}\subseteq A_{Y_2}$ if $Y_1\supseteq Y_2$, and $A_y=\overline{\bigcup_{i=1}^\infty A_{Y_i}}$ 
if $\bigcap Y_i=\{y\}$.

Consider the first return times
$$\{j\in \mathbb N\cup \{0\};\ \textrm{$\sigma^j(x)\in Y$, $\sigma^i(x)\notin Y$, $1\leq i\leq j-1$ for some $x\in Y$}\}.$$
Since $\sigma$ is minimal and $X$ is compact, this set of numbers is finite; let us write it as
$$J_1<J_2<\cdots <J_K$$
for some $K\in\mathbb N$.
Since $X$ is an infinite set and $\sigma$ is minimal, the first return time $J_1$ is arbitrarily large if $Y$ is sufficiently small.

For each $1\leq k\leq K$, consider the (locally compact---see below) subset of $X$
$$Z_k=\{x\in Y;\  \textrm{$\sigma^{J_k}(x)\in Y$ but $\sigma^i(x)\notin Y$ for any $1\leq i\leq J_k-1$}\}.$$
Then the sets 
$$Z_1, \sigma(Z_1), ..., \sigma^{J_1-1}(Z_1), ..., Z_k, \sigma(Z_k), ..., \sigma^{J_k-1}(Z_k)$$
---which are naturally listed as shown---form a partition of $X$. This is often called a Rokhlin partition.

\begin{lem}[\cite{Lin-Q-RSA}; see also \cite{QL-Ph-min-diff}]
In terms of the notation introduced above, one has that, for each $1\leq k\leq K$,\\
\indent $\mathrm{(1)}$ the set $Z_1\cup\cdots\cup Z_k$ is closed (and so $Z_k$ is locally compact),\\
\indent $\mathrm{(2)}$ the set $\overline{Z_k}\cap (Z_1\cup\cdots\cup Z_{k-1})$ is the disjoint union of the subsets
$$W_{t_1,..., t_s}=\partial Z_k\cap Z_{t_1}\cap\sigma^{-J_{t_1}}(Z_{t_2})\cap\cdots\cap\sigma^{-(J_{t_1}+\cdots+J_{t_{s-1}})}(Z_{t_s}),$$ where $J_{t_1}+\cdots +J_{t_{s-1}}+J_{t_s}=J_k$.
\end{lem}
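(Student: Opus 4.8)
The plan is to establish both parts directly from the definitions of the sets $Z_k$, using only that $\sigma$ is a homeomorphism, that $Y$ is closed, and the already-recorded consequences of minimality: every orbit meets $Y$ again (so every point of $Y$ has a first return time), and the set of first return times is the finite set $\{J_1,\dots,J_K\}$. For $\mathrm{(1)}$ I would begin from the identity
$$Z_1\cup\cdots\cup Z_k \;=\; Y\cap\Bigl(\,\bigcup_{j=1}^{J_k}\sigma^{-j}(Y)\,\Bigr),$$
which holds because a point $x\in Y$ lies in the left-hand side exactly when its first return time is at most $J_k$, i.e.\ when $\sigma^j(x)\in Y$ for some $1\le j\le J_k$; here one uses that every return time belongs to $\{J_1,\dots,J_K\}$, so a return time $\le J_k$ is automatically one of $J_1,\dots,J_k$. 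As $\sigma$ is a homeomorphism the right-hand side is a finite intersection/union of closed sets, hence closed. Therefore $Z_k=(Z_1\cup\cdots\cup Z_k)\setminus(Z_1\cup\cdots\cup Z_{k-1})$ is the intersection of a closed set with an open set, so it is locally closed, and hence locally compact in its subspace topology.

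For $\mathrm{(2)}$ the preliminary observations to extract are: first, $Z_k\subseteq Y\cap\sigma^{-J_k}(Y)$, a closed set, so $\overline{Z_k}\subseteq Y\cap\sigma^{-J_k}(Y)$; in particular $x\in Y$ and $\sigma^{J_k}(x)\in Y$ for every $x\in\overline{Z_k}$. Second, by $\mathrm{(1)}$ one has $\overline{Z_k}\subseteq Z_1\cup\cdots\cup Z_k$, whence $\overline{Z_k}\cap(Z_1\cup\cdots\cup Z_{k-1})=\overline{Z_k}\setminus Z_k$, and this set is contained in $\partial Z_k$ (since $\mathrm{int}(Z_k)\subseteq Z_k$). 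Now take $x\in\overline{Z_k}\setminus Z_k$. As $x\in Z_1\cup\cdots\cup Z_{k-1}$ it lies in $Z_{t_1}$ for some $t_1<k$, i.e.\ its first return time is $J_{t_1}$; putting $x^{(1)}=\sigma^{J_{t_1}}(x)\in Y$, its first return time is $J_{t_2}$, so $x^{(1)}\in Z_{t_2}$, and iterating gives $x^{(2)}=\sigma^{J_{t_2}}(x^{(1)})\in Z_{t_3}$, and so on. The partial sums $J_{t_1},\ J_{t_1}+J_{t_2},\ \dots$ are precisely the times $m\ge 1$ with $\sigma^m(x)\in Y$, so from $\sigma^{J_k}(x)\in Y$ we obtain $J_{t_1}+\cdots+J_{t_s}=J_k$ for some $s$, necessarily $s\ge 2$ as $J_{t_1}<J_k$; this places $x$ in the corresponding set $W$. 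Conversely, any $x\in W$ as displayed lies in $Z_{t_1}\subseteq Z_1\cup\cdots\cup Z_{k-1}$ (because $J_{t_1}<J_k$ forces $t_1<k$) and in $\partial Z_k\subseteq\overline{Z_k}$, hence in $\overline{Z_k}\cap(Z_1\cup\cdots\cup Z_{k-1})$; and the sets $W$ are pairwise disjoint, since the sequence $(t_1,\dots,t_s)$ is recovered from $x$ by reading off its successive first return times, the $Z_i$ being pairwise disjoint.

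I do not anticipate a genuine obstacle: the assertion is essentially bookkeeping about the first-return (Rokhlin) structure. The two points wanting a little care are the identity used in $\mathrm{(1)}$, which relies on $\{J_1,\dots,J_K\}$ being the \emph{complete} list of return times, and, in $\mathrm{(2)}$, keeping the iterated shifts $\sigma^{-(J_{t_1}+\cdots+J_{t_{j-1}})}$ straight while noting that membership in $\partial Z_k$ is automatic on $\overline{Z_k}\setminus Z_k$ but has to be imposed in the definition of $W$ for the reverse inclusion to be correct.
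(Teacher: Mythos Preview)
Your argument is correct and is the natural one; the paper itself does not prove this lemma but quotes it from \cite{Lin-Q-RSA}, so there is no in-paper proof to compare against. One small clarification worth making explicit in your write-up: the decomposition in $\mathrm{(2)}$ is implicitly over tuples with $s\ge 2$ (equivalently, all $t_i<k$), since with $s=1$ one would get $W=\partial Z_k\cap Z_k$, which need not lie in $Z_1\cup\cdots\cup Z_{k-1}$; your forward argument already forces $s\ge 2$, and in the converse you should state this hypothesis on $W$ rather than leave it tacit.
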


A quite explicit description of the subalgebra $A_Y$ of the crossed product, a C*-algebra of type I,  was obtained by Q.~Lin (\cite{Lin-Q-RSA}). It is a subhomogeneous algebra, of order at most $J_K$. (In other words, all its irreducible representations have at most this dimension.)

\begin{thm}[\cite{Lin-Q-RSA}; see also \cite{QL-Ph-min-diff}]\label{Lin-Sub}
In terms of the notation introduced above, one has that the C*-algebra $A_Y$ is isomorphic to the sub-C*-algebra of $\bigoplus_{k=1}^K \mathrm{M}_{J_k}(\mathrm{C}(\overline{Z_k}))$ consisting of the elements $(F_1, ..., F_K)$ with
\begin{displaymath}
F_k|_{W_{t_1, ..., t_s}}=
\left(
\begin{array}{cccc}
F_{t_1}|_{W_{t_1, ..., t_s}} & & & \\
& F_{t_2}\circ\sigma^{J_{t_1}}|_{W_{t_1, ..., t_s}} & &\\
& & \ddots &\\
& & & F_{t_s}\circ\sigma^{J_{t_s-1}}|_{W_{t_1, ..., t_s}}
\end{array}
\right)
\end{displaymath}
whenever 
$$W_{t_1, ..., t_s}=\partial Z_k\cap Z_{t_1}\cap\sigma^{-J_{t_1}}(Z_{t_2})\cap\cdots\cap\sigma^{-(J_{t_1}+\cdots+J_{t_{s-1}})}(Z_{t_s})\neq\O,$$ where $J_{t_1}+\cdots +J_{t_{s-1}}+J_{t_s}=J_k$. 

Moreover, for any $f, g\in\mathrm{C}(X)$ with $g|_Y=0$, the images of $f, ug\in A_Y$ in this identification are
\begin{equation}\label{id-01}
f=\bigoplus_{k=1}^K
\left(
\begin{array}{cccc}
f\circ \sigma|_{\overline{Z_k}} & & & \\
& f\circ \sigma^2|_{\overline{Z_k}} & &\\
& & \ddots &\\
& & & f\circ \sigma^{J_k}|_{\overline{Z_k}}
\end{array}
\right)\in \bigoplus_{k=1}^K \mathrm{M}_{J_k}(\mathrm{C}(\overline{Z_k}))
\end{equation}
and
\begin{equation}\label{id-02}
ug=\bigoplus_{k=1}^K
\left(
\begin{array}{cccc}
0 & & & \\
g\circ \sigma|_{\overline{Z_k}}& 0& & \\
&  \ddots& \ddots &\\
& & g\circ \sigma^{J_k-1}|_{\overline{Z_k}} & 0 
\end{array}
\right)\in \bigoplus_{k=1}^K \mathrm{M}_{J_k}(\mathrm{C}(\overline{Z_k})),
\end{equation}
respectively.
\end{thm}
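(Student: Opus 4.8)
The shape of the answer is explained by the observation that $A_Y$ is the $C^*$-algebra of the \'etale groupoid obtained from the transformation groupoid of $(X,\sigma)$ by breaking every orbit at $Y$; its orbits are then the finite sets $\{\sigma z,\dots,\sigma^{J_k}z\}$ with $z\in Z_k$, which is the source of the block structure. I would nonetheless run the proof concretely. Since $\sigma$ is minimal on the infinite space $X$ it is aperiodic, so for each $z\in X$ the map $n\mapsto\sigma^n z$ identifies $\Int$ with the orbit of $z$, and there is a covariant representation $\pi_z$ of $A$ on $\ell^2(\Int)$ with $\pi_z(f)\delta_n=f(\sigma^n z)\delta_n$ and $\pi_z(u)\delta_n=\delta_{n+1}$. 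The crucial elementary point is that, whereas $A$ preserves no finite-dimensional subspace, $A_Y$ does: if $z\in Z_k$ then $g(z)=g(\sigma^{J_k}z)=0$ for every $g\in\mathrm C(X)$ with $g|_Y=0$, so $\pi_z(ug)\delta_0=\pi_z(ug)\delta_{J_k}=0$ and, taking adjoints, $\pi_z(ug)^*\delta_1=\pi_z(ug)^*\delta_{J_k+1}=0$; hence $H_z:=\operatorname{span}\{\delta_1,\dots,\delta_{J_k}\}$ reduces $\pi_z(A_Y)$. Compressing to $H_z$ produces a bona fide $*$-representation $\rho_z\colon A_Y\to\mathrm M_{J_k}(\mathbb C)$ with $\rho_z(f)=\operatorname{diag}(f(\sigma z),\dots,f(\sigma^{J_k}z))$ and $\rho_z(ug)$ the lower-subdiagonal matrix with successive entries $g(\sigma z),\dots,g(\sigma^{J_k-1}z)$ --- exactly the values at $z$ of the right-hand sides of \eqref{id-01} and \eqref{id-02}.

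I would then assemble the $\rho_z$. Fixing $k$, on the dense $*$-subalgebra of $A_Y$ of words in the generators $f,ug$ every matrix entry of $z\mapsto\rho_z(a)$ is a finite sum of products of functions $z\mapsto h(\sigma^i z)$ with $h\in\mathrm C(X)$, hence the restriction to $Z_k$ of a continuous function on $X$; since $\|\rho_z(a)\|\le\|a\|$ and $Z_k$ is dense in $\overline{Z_k}$, the entries extend continuously to $\overline{Z_k}$ in a way that depends norm-continuously on $a$, giving a $*$-homomorphism $\Phi\colon A_Y\to\bigoplus_{k=1}^K\mathrm M_{J_k}(\mathrm C(\overline{Z_k}))$ that sends $f$ and $ug$ to the elements displayed in \eqref{id-01} and \eqref{id-02}. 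That the image lies in the prescribed subalgebra $B$ follows from the Lemma of Q.~Lin recalled just above: a point $z\in\overline{Z_k}\setminus Z_k$ lies in some $W=\partial Z_k\cap Z_{t_1}\cap\sigma^{-J_{t_1}}(Z_{t_2})\cap\cdots$ with $J_{t_1}+\cdots+J_{t_s}=J_k$, equivalently $\sigma^{J_{t_1}}z,\ \sigma^{J_{t_1}+J_{t_2}}z,\dots$ all lie in $Y$; evaluating $\Phi(ug)$ there annihilates the subdiagonal entries straddling those indices, so iterating over all words yields $F^{\,a}_k(z)=\operatorname{diag}\!\big(F^{\,a}_{t_1}(z),\ F^{\,a}_{t_2}(\sigma^{J_{t_1}}z),\dots,F^{\,a}_{t_s}(\sigma^{J_{t_1}+\cdots+J_{t_{s-1}}}z)\big)$, which is the block condition defining $B$.

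Finally I would show $\Phi$ is an isomorphism onto $B$. For injectivity, the representation $\bigoplus_{x\in X}\pi_x$ of $A$ is faithful (an element in its kernel has all Fourier coefficients zero), and for each $x$ the orbit of $x$ is partitioned into blocks $\{\sigma^{a+1}x,\dots,\sigma^b x\}$ with $a<b$ consecutive integers for which $\sigma^a x,\sigma^b x\in Y$; each block is, after the shift conjugating $\pi_x$ to $\pi_{\sigma^a x}$, a copy of some $H_z$ with $z\in\bigcup_k Z_k$, and by the first paragraph $\pi_x(A_Y)$ preserves each block, acting there as a conjugate of $\rho_z$. Hence $\Phi(a)=0$ forces $\pi_x(a)=0$ for all $x$, so $a=0$. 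For surjectivity one argues by Stone--Weierstrass: the diagonal elements of $B$ are exactly the tuples $\big((f\circ\sigma^i)|_{\overline{Z_k}}\big)_{1\le i\le J_k}$ for $f\in\mathrm C(X)$, because a matching family of continuous functions on the finitely many closed sets $\sigma^i\overline{Z_k}$, which cover $X$, glues to a function in $\mathrm C(X)$; since $\{g\in\mathrm C(X):g|_Y=0\}=\mathrm C_0(X\setminus Y)$ and $X\setminus Y$ is the disjoint union of the rungs $\sigma^i Z_k$ with $1\le i\le J_k-1$, the matching conditions defining $B$ are precisely the compatibility relations describing $\mathrm C_0(X\setminus Y)$ and the functions built from it that occur on the off-diagonals; and since the points $\sigma z,\dots,\sigma^{J_k}z$ over $z\in Z_k$ are distinct with $\sigma z,\dots,\sigma^{J_k-1}z\notin Y$, Urysohn's lemma gives $\rho_z(A_Y)=\mathrm M_{J_k}(\mathbb C)$, so $\Phi(A_Y)$ also meets every fibre. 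Together these give $\Phi(A_Y)=B$.

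The step I expect to be the main obstacle is the matching condition: verifying that what the continuous extensions of the $\rho_z$ force on $\Phi(A_Y)$ at the points of $\overline{Z_k}\setminus Z_k$ is neither more nor less than the block-diagonal relations in the statement. This is exactly where the structural Lemma on $\overline{Z_k}\cap(Z_1\cup\cdots\cup Z_{k-1})$ is indispensable --- it furnishes the combinatorial decomposition of a degenerating height-$J_k$ tower into shorter towers of heights $J_{t_1},\dots,J_{t_s}$ summing to $J_k$, which is what pins the image of $\Phi$ down to the stated subhomogeneous algebra rather than to all of $\bigoplus_k\mathrm M_{J_k}(\mathrm C(\overline{Z_k}))$.
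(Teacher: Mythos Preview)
The paper does not prove this theorem: it is stated with the attribution \cite{Lin-Q-RSA} and no proof environment follows. So there is nothing in the paper to compare your argument against; your proposal is supplying a proof the authors chose to cite rather than reproduce.

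That said, your outline is sound and follows the natural route one would take to reconstruct Q.~Lin's result. The construction of the finite-dimensional reducing subspace $H_z$ for $\pi_z|_{A_Y}$, the identification of the compressed representation $\rho_z$ with the displayed matrices, the continuous extension to $\overline{Z_k}$, and the injectivity argument (decomposing each orbit representation of $A_Y$ as a direct sum of $\rho_z$'s) are all correct. Your use of the preceding structural Lemma to pin down the boundary matching conditions is exactly the right mechanism, and your final paragraph correctly flags this as the substantive step.

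The one place where your argument is thin is surjectivity. You gesture at Stone--Weierstrass via three ingredients (diagonals glue to $\mathrm{C}(X)$, off-diagonals come from $\mathrm{C}_0(X\setminus Y)$, fibres are full), but you do not make precise which Stone--Weierstrass theorem you are invoking for a subhomogeneous target, nor do you verify that $\Phi(A_Y)$ separates the irreducible representations of $B$. The cleanest fix is to show that the irreducible representations of $B$ are exactly the $\rho_z$ with $z\in\bigsqcup_k Z_k$, and then check (using both a separating $f\in\mathrm{C}(X)$ and $uH$ for a suitable $H$, as in Lemma~\ref{uni-sep} of the present paper) that distinct $z$'s give inequivalent restrictions to $\Phi(A_Y)$; together with $\rho_z(A_Y)=\mathrm{M}_{J_k}(\mathbb C)$ this makes $\Phi(A_Y)$ a rich subalgebra of $B$ in Dixmier's sense, whence $\Phi(A_Y)=B$. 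Alternatively, your gluing claim for the diagonal can be made rigorous, but it requires checking that the overlap relations among the closed sets $\sigma^i(\overline{Z_k})$ coincide with the block-matching relations defining $B$, which is more bookkeeping than you have written down.
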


The sub-C*-algebra $A_y$ of $A$ (with $y\in X$) is a typical example of a large sub-algebra:
\begin{defn}[\cite{NCP-LAlg}, \cite{A-NCP-LAlg}]
Let $A$ be an infinite dimensional unital simple separable C*-algebra. A unital sub-C*-subalgebra $B\subseteq A$ is said to be large in $A$ if for every
$m\in \mathbb{Z}_{>0}$, $a_1, a_2, ..., a_m\in A$, $\eps>0$, $x\in A^+$ with $\norm{x}=1$, and $y\in B^+\setminus\{0\}$, there are $c_1, c_2, ..., c_m\in A$ and $g\in B$ such that:
\begin{itemize}
\item[(1)] $0 \leq g \leq 1$,
\item[(2)] For $j=1, 2, ..., m$ we have $\norm{c_j-a_j}<\eps$,
\item[(3)] For $j=1, 2, ..., m$ we have $(1-g)c_j, c_j(1-g)\in B$,
\item[(4)] $g\preceq_B y$, and
\item[(5)] $\norm{(1-g)x(1-g)}>1-\eps$.
\end{itemize}
If, moreover, the element $g$ can be chosen so that
\begin{itemize}
\item[(6)] $\norm{ga_j-a_jg}<\eps$, $j=1, 2, ..., m$,
\end{itemize}
then the sub-C*-algebra $B$ is said to be centrally large in $A$.
\end{defn}

\begin{thm}[Theorem 4.6 of \cite{A-NCP-LAlg}]\label{LALG}
The C*-algebra $A_y$ is centrally large in $A$.
\end{thm}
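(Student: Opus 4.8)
The plan is to verify, directly, the six defining conditions of central largeness, by producing---once all the data have been specified---a single positive contraction $g\in A_y$ that meets all of them at once; this is essentially the argument of Archey and Phillips. To keep notation unambiguous I will write $z$ for the element of $A_y^+\setminus\{0\}$ appearing in the definition, reserving $y$ for the distinguished point of $X$. I would rely on two standard facts: that $A_y$ is simple, separable and unital, and that the finite Laurent sums $\sum_{n=-N}^{N}f_nu^n$ with $f_n\in\mathrm C(X)$ are dense in $A$, while for $n\ge1$ one has $fu^n\in A_y$ exactly when $f$ vanishes on the orbit segment $\{y,\sigma^{-1}(y),\dots,\sigma^{-(n-1)}(y)\}$, and symmetrically for $n\le-1$. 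So, given $m$, $a_1,\dots,a_m\in A$, $\eps>0$, $x\in A^+$ with $\norm x=1$, and $z$, I would first fix $N$ and $c_j=\sum_{\abs n\le N}f_{j,n}u^n$ with $\norm{c_j-a_j}<\eps$ and $\norm{f_{j,n}}\le M$; condition (2) is then automatic.

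For $g$ I would take a continuous function assembled from short orbit translates of one Urysohn function. Fix an open neighbourhood $V$ of $y$---to be shrunk further below---and $\chi\in\mathrm C(X)$ with $\chi=1$ on a neighbourhood $Y$ of $y$ and $\mathrm{supp}\,\chi\subseteq V$, put $L=(2N+1)+\lceil N(2N+1)M/\eps\rceil$, and set $g=\max_{\abs j\le L}\eta_j\circ\sigma^j$, where $\eta_j=\chi$ for $\abs j\le 2N+1$ and $\eta_j=\max\bigl(0,1-\tfrac{\abs j-2N-1}{L-2N-1}\bigr)\chi$ for $2N+1<\abs j\le L$. Then $g\in\mathrm C(X)\subseteq A_y$ with $0\le g\le1$, so (1) holds; $g\equiv1$ on $\bigcup_{\abs j\le 2N+1}\sigma^{-j}(Y)$, hence $1-g$ vanishes on the orbit segment $\{\sigma^j(y):\abs j\le 2N+1\}$, which forces $(1-g)c_j$ and $c_j(1-g)$ into $A_y$ and gives (3); and $\norm{g\circ\sigma-g}\le(L-2N-1)^{-1}$, so, since $[g,c_j]=\sum_nf_{j,n}[g,u^n]$ (the commutator $[g,f_{j,n}]$ vanishes, both elements lying in the abelian algebra $\mathrm C(X)$) while $\norm{[g,u^n]}=\norm{g\circ\sigma^n-g}\le\abs n(L-2N-1)^{-1}$, one obtains $\norm{[g,c_j]}<\eps$, which is (6). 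Note also $\{g\neq 0\}\subseteq\bigcup_{\abs j\le L}\sigma^{-j}(V)$.

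The remaining conditions (4) and (5) I would force by shrinking $V$ further. As $V\downarrow\{y\}$ the set $\bigcup_{\abs j\le L}\sigma^{-j}(V)$ decreases to the finite orbit segment $\{\sigma^{-j}(y):\abs j\le L\}$, which contains no nonempty open set; hence for $V$ small enough there is a point $t_0$ with $x(t_0)>1-\eps$ and $g(t_0)=0$---when $x\in\mathrm C(X)^+$; a general positive $x$ of norm $1$ is reduced to this by a routine argument---so that $\norm{(1-g)x(1-g)}\ge x(t_0)>1-\eps$, giving (5). For (4): the canonical conditional expectation $E:A\to\mathrm C(X)$ is faithful, so $E(z^*z)\neq 0$ and its support is a nonempty open subset of $X$, hence of measure at least some $\delta>0$ for every $\sigma$-invariant Borel probability measure, uniformly (finitely many translates of it cover $X$). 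Since the invariant measures of an infinite minimal system are non-atomic, $\sup_\mu\mu(V)\to 0$ along a neighbourhood base of $y$, so, shrinking $V$, one may assume $\mu(\{g\neq 0\})<\delta$ for all invariant $\mu$; a comparison argument inside the recursive subhomogeneous algebras of Theorem~\ref{Lin-Sub}---applied after perturbing $z$ into a fixed finite stage $A_{Y_0}$ and taking the working $Y$ finer than $Y_0$---should then upgrade this strict inequality of measures to $g\preceq_{A_y}z$, which is (4).

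The genuinely hard step is this last comparison. The finite-stage algebras $A_Y$ of Theorem~\ref{Lin-Sub} are only subhomogeneous---with no a priori bound on the dimensions of the base spaces $\overline{Z_k}$---so Cuntz subequivalence there cannot be read off from a trace inequality; and $g$ is forced to equal $1$ on a neighbourhood of an orbit segment through $y$ itself, so one cannot translate $g$ clear of $y$ in order to reduce $\preceq_{A_y}$ to $\preceq_A$. The two pressures on $g$---to be $1$ on a fixed orbit segment of length $O(N)$, yet to be Cuntz-dominated by an arbitrarily small nonzero $z$---are compatible only because the Rokhlin columns can be made proportionally far longer than $N$; carrying this out inside $A_y$, and managing the bookkeeping that links $N$, $M$, $\eps$, $\delta$ and the size of $V$, is where the real work lies.
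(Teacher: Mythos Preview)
The paper does not prove this theorem: it is stated with attribution to Archey--Phillips \cite{A-NCP-LAlg} and no argument is given.  So there is no ``paper's own proof'' to compare against; what you have written is an attempted reconstruction of the Archey--Phillips argument itself.

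Your outline is the right one---approximate by Laurent polynomials, take $g$ to be a tapered sum of orbit-translates of a bump function at $y$, and shrink the supporting neighbourhood to force the comparison conditions---and conditions (1), (2), (3), (6) are handled correctly.  Two points, however, are not settled.  First, for (5) you treat only $x\in\mathrm C(X)^+$ and call the general case ``routine''; it is not a one-liner.  For arbitrary $x\in A^+$ one must pass to an orbit representation $\pi_{t_0}$ on $\ell^2(\mathbb Z)$, pick a nearly optimal vector of finite support, and then arrange that $g$ vanishes along a sufficiently long orbit segment through $t_0$; this introduces a further dependency between $t_0$, the Laurent degree of an approximant to $x$, and the neighbourhood $V$, which your write-up does not track.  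Second---and you are candid about this---condition (4) is the crux.  Your proposed route (strict trace inequality plus comparison inside a fixed $A_Y$) runs into exactly the obstacle you name: the base spaces $\overline{Z_k}$ have no dimension bound, so strict inequality of invariant measures does not by itself yield Cuntz subequivalence in $A_Y$ or $A_y$.  The Archey--Phillips argument resolves this not by a generic comparison theorem but by an explicit construction: one takes $Y$ so small that each Rokhlin tower has height far exceeding $L$, so that the characteristic function of a thin band of levels dominates $g$ and is in turn dominated (via explicit partial isometries built from $u$ and cut-off functions) by a function supported in a single translate of a set on which $E(z^*z)$ is bounded below; the subequivalence is then exhibited by hand inside $A_y$, not deduced from a trace inequality.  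Until that construction (or an equivalent one) is written out, the proof is incomplete.
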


\begin{thm}[Proposition 3.7 of \cite{A-NCP-LAlg}]\label{CLalg-prod}
Let $A_1$ and $A_2$ be infinite dimensional simple unital C*-algebras, and
let $B_1 \subset A_1$ and $B_2 \subset A_2$ be centrally large subalgebras.
Assume that $A_1 \otimes_{\min} A_2$ is finite. Then $B_1 \otimes_{\min} B_2$
is a centrally large subalgebra of $A_1 \otimes_{\min} A_2$.
\end{thm}

We will use the following property of centrally large sub-C*-algebras.
\begin{thm}[Theorem 2.3 of \cite{A-NCP-LAlg-Z}]\label{A-NCP-Z}
Let $A$ be a simple separable infinite dimensional nuclear unital C*-algebra, and let $B$ be a centrally large subalgebra of $A$. If $\mathcal Z\otimes B\cong B$, then $\mathcal Z\otimes A\cong A$.
\end{thm}

\section{The C*-algebra of a minimal homeomorphism of mean dimension zero }

Recall that a C*-algebra is said to be subhomogeneous if the dimensions of its irreducible representations are finite and uniformly bounded. Let $S$ be a subhomogeneous C*-algebra, with dimensions of irreducible representations $d_1<d_2<\cdots<d_n$. The dimension ratio of $S$ is defined as
$$\mathrm{dimRatio}(S):=\max\{\frac{\mathrm{dim}(\mathrm{Prim}_{d_1}(S))}{d_1}, \frac{\mathrm{dim}(\mathrm{Prim}_{d_2}(S))}{d_2}, ..., \frac{\mathrm{dim}(\mathrm{Prim}_{d_n}(S))}{d_n}  \},$$
where $\mathrm{Prim}_{d_i}(S)$ denotes the space of the primitive ideals of $A$ corresponding to the irreducible representations with dimension $d_i$, with the (relative) Jacobson topology ($\mathrm{Prim}_{d_i}(S)$ is then locally compact and Hausdorff; see Proposition 3.6.4(i) of \cite{Dix-Book}), and $\mathrm{dim}(\cdot)$ denotes the topological covering dimension. 

By Proposition 2.13 (together with 2.5 and 2.9) of \cite{Phill-RSA1}, if the primitive ideal spaces of $S$ have finite dimension, then the C*-algebra $S$ has a recursive subhomogeneous decomposition,
$$S\cong\left[\cdots\left[\left[C_0\oplus_{C_1^{(0)}}C_1\right]\oplus_{C_2^{(0)}}C_2\right]\cdots\right]\oplus_{C_l^{(0)}}C_l,$$ 
with $C_k=\mathrm{C}(X_k, \mathrm{M}_{n(k)})$ for a compact Hausdorff space $X_k$ and a positive integer $n(k)$, and with $C_k^{(0)}=\mathrm{C}(X_k^{(0)}, \mathrm{M}_{n(k)})$ for a compact subset $X_k^{(0)}\subseteq X_k$ (possibly empty), and then
$$\max\{\frac{\mathrm{dim}(X_k)}{n(k)}: 0\leq k\leq l\} = \mathrm{dimRatio}(S).$$
(See \cite{Phill-RSA1} for more details concerning recursive subhomogeneous C*-algebras.)

In the present section, it will be shown (using Theorem \ref{Lin-Sub} indirectly) that if $(X, \sigma)$ has zero mean dimension (and, as is understood, $\sigma$ is minimal), then the large subalgebra $A_y$ can be locally approximated by subhomogeneous C*-algebras with arbitrarily small dimension ratio (see Theorem \ref{lc-app}).

As a consequence of this, it follows (on applying the large subalgebra technique---see \cite{NCP-LAlg}) that the crossed product C*-algebra $\mathrm{C}(X)\rtimes_\sigma\Int$ absorbs the Jiang-Su algebra $\mathcal Z$, the main result of this paper (Theorem \ref{Z-stb}).

Of the following three lemmas (Lemmas \ref{small-nbhd}, \ref{uni-sep}, and \ref{cont-field}), only the first concerns dynamical systems; the other two are elementary C*-algebra results.

\begin{lem}\label{small-nbhd}
Let $Y\subseteq X$ be a closed subset with nonempty interior. Denote by $Z_1, Z_2, ..., Z_K$ the bases of the Rokhlin towers generated by $Y$, and by $J_1<J_2<\cdots<J_K$ the first return times of $Z_1, Z_2, ..., Z_K$, respectively. There is an open set $U\supseteq Y$ such that for each $1\leq k\leq K$, one has
$$\frac{1}{J_k}(\chi_{U}(x)+\chi_U(\sigma(x))+\cdots+ \chi_U(\sigma^{J_k-1}(x)))\leq \frac{1}{J_1},\quad x\in Z_{k}.$$
\end{lem}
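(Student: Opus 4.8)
I need to prove Lemma \ref{small-nbhd}: given a closed set $Y$ with nonempty interior, with Rokhlin bases $Z_1,\dots,Z_K$ and return times $J_1<\cdots<J_K$, I can thicken $Y$ slightly to an open $U\supseteq Y$ so that along any orbit segment $x,\sigma x,\dots,\sigma^{J_k-1}x$ with $x\in Z_k$, the number of hits of $U$ is at most $J_k/J_1$.

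Here is the approach I would take. The key observation is that for $x\in Z_k$, the orbit segment $x,\sigma x,\dots,\sigma^{J_k-1}x$ visits $Y$ \emph{exactly once}, namely at $x$ itself (that is the defining property of $Z_k$: $\sigma^i(x)\notin Y$ for $1\le i\le J_k-1$, while $x\in Z_k\subseteq Y$). So for $Y$ itself the count is exactly $1\le J_k/J_1$ (since $J_1$ is the smallest return time, $J_k\ge J_1$). The issue is only that $Y$ is closed, not open, so I must enlarge it; enlarging introduces possible extra hits from the points $\sigma^i(x)$, $1\le i\le J_k-1$, which lie \emph{outside} $Y$ but could be dragged into a neighborhood $U$ of $Y$.

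\textbf{Key steps.} First, I would fix, for each $k$ and each $1\le i\le J_k-1$, the fact that $\sigma^i(Z_k)\cap Y=\emptyset$; taking closures, $\overline{\sigma^i(Z_k)}=\sigma^i(\overline{Z_k})$, and since $\overline{Z_k}\subseteq Z_1\cup\cdots\cup Z_k$ is compact (by part (1) of the cited lemma of Q.~Lin), the set $\sigma^i(\overline{Z_k})$ is compact. The subtlety is that $\sigma^i(\overline{Z_k})$ \emph{might} touch $Y$ along the boundary $\partial Z_k$; but such boundary points get identified, via the gluing relations of Theorem \ref{Lin-Sub}, with earlier towers, and more to the point, I claim that for $x$ in the \emph{interior} $Z_k^{\circ}$ (relative to $Z_1\cup\cdots\cup Z_k$), the orbit points $\sigma^i(x)$, $1\le i\le J_k-1$, stay at positive distance from $Y$. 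Rather than fuss over boundary points, the cleaner route: the whole point is to choose $U$ so small that the number of $i\in\{0,1,\dots,J_k-1\}$ with $\sigma^i(x)\in U$ does not exceed $\lfloor J_k/J_1\rfloor$. I would argue by contradiction and compactness. Suppose no such $U$ exists. Take a decreasing sequence of open neighborhoods $U_n\searrow Y$ (e.g. $U_n=\{x:\operatorname{dist}(x,Y)<1/n\}$); then for some fixed $k$ (pigeonhole, since there are finitely many) and each $n$ there is $x_n\in Z_k$ with $\sum_{i=0}^{J_k-1}\chi_{U_n}(\sigma^i(x_n))>J_k/J_1$. Passing to a subsequence, $x_n\to x\in\overline{Z_k}$, and for each $i$ with $\sigma^i(x_n)\in U_n$ infinitely often we get $\sigma^i(x)\in\bigcap_n\overline{U_n}=Y$. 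Thus the orbit segment $x,\sigma x,\dots,\sigma^{J_k-1}x$ of the limit point $x\in\overline{Z_k}$ meets $Y$ in \emph{strictly more} than $J_k/J_1$ indices.

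\textbf{Finishing and the main obstacle.} To reach a contradiction I need: any orbit segment $x,\sigma x,\dots,\sigma^{J_k-1}x$ with $x\in\overline{Z_k}$ meets $Y$ in at most $J_k/J_1$ indices — equivalently, in at most $\lfloor J_k/J_1\rfloor$ indices, which is at most $J_k/J_1$. This is where the Rokhlin-partition structure does the work: if $\sigma^{a}(x),\sigma^{b}(x)\in Y$ with $0\le a<b\le J_k-1$ are two \emph{consecutive} such indices, then $\sigma^{a}(x)$ is a point of $Y$ whose first return to $Y$ occurs at time $b-a$, so $b-a$ is one of the return times $J_1,\dots,J_K$, hence $b-a\ge J_1$. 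Therefore if the segment of length $J_k$ meets $Y$ at indices $0\le i_1<i_2<\cdots<i_r\le J_k-1$ (note $i_1$ need not be $0$ once we are at the \emph{closure}, but $x\in\overline{Z_k}\subseteq Z_1\cup\cdots\cup Z_k\subseteq Y$ by Lin's lemma part (1), so in fact $i_1=0$), the gaps satisfy $i_{j+1}-i_j\ge J_1$, whence $J_k-1\ge i_r\ge (r-1)J_1$, giving $r\le 1+(J_k-1)/J_1\le J_k/J_1$. This contradicts $r>J_k/J_1$ from the limiting argument, completing the proof.

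The main obstacle, as I see it, is the care needed at the boundary points of the towers: I must be sure that $\overline{Z_k}\subseteq Y$ so that the limit point $x$ lies in $Y$ (forcing $i_1=0$) and that the ``consecutive hits are separated by a return time'' argument applies verbatim to closure points — but both follow directly from part (1) of Lin's lemma ($Z_1\cup\cdots\cup Z_k$ is closed, hence $\overline{Z_k}\subseteq Z_1\cup\cdots\cup Z_k\subseteq Y$) and from minimality being irrelevant here (the return-time structure is purely combinatorial on $Y$). A secondary subtlety is merely choosing $U$ open containing the compact set $Y$; the metric neighborhoods $\{x:\operatorname{dist}(x,Y)<1/n\}$ suffice, with $\bigcap_n\{x:\operatorname{dist}(x,Y)\le 1/n\}=Y$ since $Y$ is closed.
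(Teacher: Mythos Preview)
Your compactness-and-limit approach is valid in outline and genuinely different from the paper's. The paper proceeds by induction on $k$: for a boundary point $x\in\overline{Z_k}\setminus Z_k$ it invokes part~(2) of Lin's lemma to decompose the length-$J_k$ orbit segment into consecutive pieces lying in lower towers $Z_{t_1},\dots,Z_{t_s}$ with $J_{t_1}+\cdots+J_{t_s}=J_k$, applies the induction hypothesis to each piece, extends to a neighbourhood $E$ of the boundary, and then shrinks $U$ further over the remaining compact set $Z_k\setminus E$. Your argument collapses all of this into a single limit; it is shorter and isolates the combinatorial core (consecutive visits to $Y$ are at least $J_1$ apart) without unpacking the tower boundaries. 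The paper's induction, by contrast, makes the role of Lin's decomposition explicit.

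There is, however, a real error in your final step. You write
\[
r\ \le\ 1+\frac{J_k-1}{J_1}\ \le\ \frac{J_k}{J_1},
\]
but the second inequality rearranges to $J_1\le 1$, which is false as soon as $J_1\ge 2$. The bound $i_r\le J_k-1$ alone is too weak. The repair is to use one more fact already at hand: since $\sigma^{J_k}(Z_k)\subseteq Y$ and $Y$ is closed, also $\sigma^{J_k}(\overline{Z_k})\subseteq Y$, so $\sigma^{J_k}(x)\in Y$ for your limit point $x\in\overline{Z_k}$. Then the $r$ gaps
\[
i_2-i_1,\ \dots,\ i_r-i_{r-1},\ J_k-i_r
\]
are each first return times to $Y$ (hence $\ge J_1$) and sum to exactly $J_k$, giving $rJ_1\le J_k$, i.e.\ $r\le J_k/J_1$, the required contradiction. (This is precisely the identity $J_{t_1}+\cdots+J_{t_s}=J_k$ in Lin's lemma that the paper's inductive proof exploits.) With this correction your argument is complete.
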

\begin{proof}
Note that by definition the inequality holds with $Y$ in place of $U$. (So, the question is to extend this in some sense by continuity to a neighbourhood---we propose to do this by induction on $k$.)

Since $Y$ is closed, and the sets
$$Y, \sigma(Y), ..., \sigma^{J_1-1}(Y)$$
are pairwise disjoint, there is an open set $U\supseteq Y$ such that 
$$U, \sigma(U), ..., \sigma^{J_1-1}(U)$$
are pairwise disjoint. In particular, 
$$\frac{1}{J_1}(\chi_{U}(x)+\chi_U(\sigma(x))+\cdots+ \chi_U(\sigma^{J_1-1}(x)))\leq \frac{1}{J_1},\quad x\in Z_{1}.$$

Let $2\leq k\leq K$, and assume that we have constructed an open set $U\supseteq Y$ such that 
for any $1\leq i\leq k-1$, 
\begin{equation}\label{ind-asptn}
\frac{1}{J_i}(\chi_{U}(x)+\chi_U(\sigma(x))+\cdots+ \chi_U(\sigma^{J_i-1}(x)))\leq \frac{1}{J_1},\quad x\in Z_{i}.
\end{equation}
Let us construct another open neighbourhood of $Y$, still to be denoted by $U$ (just shrink!), such that \eqref{ind-asptn} holds for $i=k$.

First, pick an open neighbourhood $U'$ of $Y$ such that $\overline{U'}\subseteq U$. Let  
$x\in \overline{Z_{k}}\cap (Z_{1}\cup\cdots\cup Z_{k-1})$. If
$$x\in W_{t_1, ..., t_s}= \overline{Z_{k}}\cap Z_{t_1}\cap\sigma^{-J_{t_1}}(Z_{t_2})\cap\cdots\cap \sigma^{-(J_{t_1}+\cdots+J_{t_{s-1}})}(Z_{t_s}),$$
where $J_{t_1}+\cdots+J_{t_{s-1}}+J_{t_s}=J_k,$
then the orbit of $x$ is
$$\underbrace{x, \sigma(x),...,\sigma^{J_{t_1}-1}(x)}_{\textrm{in tower $Z_{t_1}$}}, \underbrace{\sigma^{J_{t_1}}(x), ..., \sigma^{J_{t_2}}(\sigma^{J_{t_1}}(x))}_{\textrm{in tower $Z_{t_2}$}}, ...,  \underbrace{\sigma^{J_{t_1}+\cdots+J_{t_{s-1}}}(x), ..., \sigma^{J_{t_s}}(\sigma^{J_{t_1}+\cdots+J_{t_{s-1}}}(x))}_{\textrm{in tower $Z_{t_s}$}}.$$
By the induction hypothesis \eqref{ind-asptn}, one has (note that $J_{t_1}+\cdots+J_{t_{s-1}}+J_{t_s}=J_k$)
$$
\frac{1}{J_k}(\chi_{U}(x)+\chi_U(\sigma(x))+\cdots+ \chi_U(\sigma^{J_k-1}(x)))\leq \frac{1}{J_1},
$$
and therefore, there is a neighbourhood $V_x$ of $x$ such that
\begin{equation}
\frac{1}{J_k}(\chi_{U'}(z)+\chi_{U'}(\sigma(z))+\cdots+ \chi_{U'}(\sigma^{J_k-1}(z)))\leq \frac{1}{J_1},\quad z\in V_x.
\end{equation}
Hence, there is an open set $E$ such that $$\overline{Z_{k}}\cap (Z_{1}\cup\cdots\cup Z_{k-1})\subseteq E$$ and 
\begin{equation}\label{small-bd}
\frac{1}{J_k}(\chi_{U'}(z)+\chi_{U'}(\sigma(z))+\cdots+ \chi_{U'}(\sigma^{J_k-1}(z)))\leq \frac{1}{J_1},\quad z\in E.
\end{equation}
Replace $U$ by $U'$ and still denote it by $U$. Since $Z_{1}\cup\cdots\cup Z_{k-1}\cup Z_{k}$ is a closed set, one has that
$$\overline{Z_{k}}\setminus Z_{k}\subseteq \overline{Z_{k}}\cap (Z_{1}\cup\cdots\cup Z_{k-1}),$$
and hence $\overline{Z_{k}}\setminus Z_{k}\subseteq E.$
In particular, 
$$\overline{Z_{k}}\setminus E = Z_{k}\setminus E ,$$
and $Z_{k}\setminus E$ is a compact set.

For any point $x$ in $Z_{k}\setminus E$, one can shrink $U$ further so that
\begin{equation}\label{small-comp-loc}
\frac{1}{J_k}(\chi_{\overline{U}}(x)+\chi_{\overline{U}}(\sigma(x))+\cdots+ \chi_{\overline{U}}(\sigma^{J_k-1}(x)))\leq \frac{1}{J_1}.
\end{equation}
Note that \eqref{small-comp-loc} holds for a neighbourhood of $x$. Since $Z_{k}\setminus E$ is compact, there is an open neighbourhood $U$ of $Y$ such that
\begin{equation}\label{small-comp}
\frac{1}{J_k}(\chi_{U}(x)+\chi_{U}(\sigma(x))+\cdots+ \chi_{U}(\sigma^{J_k-1}(x)))\leq \frac{1}{J_1},\quad x\in Z_{k}\setminus E.
\end{equation}
Together with \eqref{small-bd}, one has 
\begin{equation}
\frac{1}{J_k}(\chi_{U}(x)+\chi_{U}(\sigma(x))+\cdots+ \chi_{U}(\sigma^{J_k-1}(x)))\leq \frac{1}{J_1},\quad x\in Z_{k},
\end{equation}
as desired.
\end{proof}

\begin{lem}\label{uni-sep}
Consider $n\times n$ matrices with complex entries
$$A:=\mathrm{diag}(a_1, ..., a_n),\quad B:=\mathrm{diag}(b_1, ..., b_n)$$
\begin{displaymath}
C:=\left(
\begin{array}{cccc}
0 &    &  & \\
c_1 & 0  &  &  \\
 & \ddots  & \ddots & \\
 &    & c_{n-1} & 0
\end{array}
\right)
\quad
\textrm{and}
\quad
D:=\left(
\begin{array}{cccc}
0 &    &  & \\
d_1 & 0  &  &  \\
 & \ddots  & \ddots & \\
 &    & d_{n-1} & 0
\end{array}
\right),
\end{displaymath}
where $0<c_i, d_i\leq 1$.  If the pair $(A, C)$ is unitarily equivalent to the pair $(B, D)$, then
$$a_i=b_i,\ c_j=d_j,\quad 1\leq i\leq n,\  1\leq j\leq n-1 .$$
\end{lem}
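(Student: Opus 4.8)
The plan is to prove that any unitary $U$ implementing the equivalence, i.e.\ with $UAU^\ast = B$ and $UCU^\ast = D$, must be a scalar multiple of the identity; the two conclusions then follow at once. Write $e_1, \dots, e_n$ for the standard orthonormal basis of $\Comp^n$, so that $C e_i = c_i e_{i+1}$ and $D e_i = d_i e_{i+1}$ for $1 \le i \le n-1$, $C e_n = D e_n = 0$, and $A e_i = a_i e_i$, $B e_i = b_i e_i$.

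\emph{Step 1: locate $Ue_1$.} Since each $c_i > 0$, the range of $C$ is $\mathrm{span}\set{e_2, \dots, e_n}$, hence $\ker C^\ast = (\mathrm{ran}\,C)^\perp = \Comp e_1$; likewise $\ker D^\ast = \Comp e_1$. Taking adjoints in $UCU^\ast = D$ gives $UC^\ast U^\ast = D^\ast$, so $U$ carries $\ker C^\ast$ onto $\ker D^\ast$, that is, $U e_1 = \mu e_1$ for some scalar $\mu$ with $\abs{\mu} = 1$.

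\emph{Step 2: propagate by induction.} Assume $U e_i = \mu e_i$ for some $i$ with $1 \le i \le n-1$. From $UC = DU$ we get $c_i\, U e_{i+1} = U C e_i = D U e_i = \mu d_i e_{i+1}$, so $U e_{i+1} = (\mu d_i / c_i) e_{i+1}$; comparing norms and using that $c_i, d_i > 0$ and $\abs{\mu} = 1$ forces $c_i = d_i$, and then $U e_{i+1} = \mu e_{i+1}$. Inductively $U = \mu I$ and $c_j = d_j$ for $1 \le j \le n-1$; finally $B = UAU^\ast = A$, so $a_i = b_i$.

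The only step requiring a moment's thought is the first one: the key observation is that, because the subdiagonal entries of $C$ are nonzero, $C^\ast$ has one-dimensional kernel spanned by $e_1$, which pins $U e_1$ down to a phase; once this is in hand, the weighted-shift form of $C$ and $D$ rigidly forces $U$ to act by the same scalar on the entire basis. No serious obstacle is anticipated — the argument is elementary linear algebra (with the degenerate case $n = 1$, where $C = D = 0$, being trivial).
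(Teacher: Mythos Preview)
Your proof is correct. It takes a genuinely different route from the paper's argument, and in fact a slightly sharper one.

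The paper argues as follows: from $W^*CW=D$ one has $W^*(C^*)^kC^kW=(D^*)^kD^k$ for every $k$, and since $(C^*)^kC^k$ is diagonal with support projection $e_1+\cdots+e_{n-k}$ (the $c_i$ being nonzero), a functional calculus step yields $W^*(e_1+\cdots+e_j)W=e_1+\cdots+e_j$ for each $j$, hence $W^*e_iW=e_i$; thus $W$ is diagonal, from which $a_i=b_i$ and (taking moduli in the off-diagonal relation) $c_i=d_i$. Your argument instead pins down $Ue_1$ via $\ker C^*=\Comp e_1$ and then propagates along the basis using the weighted-shift relation $UC=DU$, extracting $c_i=d_i$ at each step from the unitarity of $U$. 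Your approach avoids the functional calculus detour and yields the stronger intermediate conclusion that $U$ is actually a scalar multiple of the identity (not merely diagonal); the paper's approach has the minor advantage that the key observation---$W$ commutes with the flag of projections---is visible in one stroke rather than by induction.
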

\begin{proof}
Let $W\in\mathrm{M}_n(\Comp)$ be a unitary such that
$$W^*AW=B\quad\textrm{and}\quad W^*CW=D.$$  
For each $1\leq k\leq n$, one has $W^*((C^*)^kC^k)W=(D^*)^kD^k$, and 
a functional calculus argument shows that 
$$W^*(e_1+\cdots +e_k)W=e_1+\cdots + e_k,\quad 1\leq k\leq n,$$
where $e_i$ is the $i$th standard rank-one projection.
This implies that
$$W^*e_iW=e_i,\quad 1\leq i\leq n.$$

Since $W^*AW=B$, it follows that 
$$W^*e_iAe_iW=e_iBe_i,\quad 1\leq i\leq n,$$
and hence $$a_i=b_i,\quad 1\leq i\leq n.$$
A similar argument shows that  
$c_i=d_i,$ $1\leq i\leq n.$
\end{proof}

\begin{lem}\label{cont-field}
Let $Z$ be a second countable locally compact Hausdorff space, and let $S$ be a sub-C*-algebra of $\mathrm{M}_n(\mathrm{C_0}(Z))$. Suppose that there exist a topological space $\Delta$ and a surjective continuous map $\xi: Z\to\Delta$ such that 
\begin{enumerate}
\item\label{cond-sep} $\xi(x_1)=\xi(x_2)$ if and only if $\pi_{x_1}|_S$ is unitarily equivalent to $\pi_{x_2}|_S$, 
\item\label{cond-cont} for any $g\in S$, if $\xi(x_i)\to\xi(x)$, then $g(x_i) \to g(x)$, and
\item\label{cond-full} $\pi_x(S)=\mathrm{M}_n(\mathbb C)$, for any $x\in Z$.
\end{enumerate}
Then $S\cong\mathrm{M}_n(\mathrm{C}_0(\Delta))$.
\end{lem}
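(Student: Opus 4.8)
The plan is to produce a local trivialization of $S$ as a continuous field, and then assemble these into a global isomorphism with $\mathrm{M}_m(\mathrm{C}_0(\Delta))$. First I would verify that $S$ is actually (isomorphic to) the algebra of sections of a genuine continuous field of matrix algebras over $\Delta$: condition \eqref{cond-sep} says that the primitive ideal space of $S$ is exactly $\Delta$ (via $\xi$), condition \eqref{cond-full} says every irreducible representation of $S$ has dimension $n$, and condition \eqref{cond-cont} says that elements of $S$ push down to continuous (vanishing-at-infinity) functions on $\Delta$. So one should check that $\xi$ is not only continuous and surjective but in fact a quotient map (i.e.\ $\Delta$ carries the quotient topology), which will follow from the fact that the map on primitive ideal spaces induced by a surjection is always a homeomorphism onto its image with the quotient topology, combined with \eqref{cond-cont} to control the topology on $\Delta$. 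The upshot is that $S$ is a (possibly non-trivial) $\mathrm{M}_n$-bundle algebra over $\Delta$, so $m$ is forced to be $n$ — there is no way to get a different matrix size, since all irreducible representations of $S$ have the same dimension $n$.

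The second step is to show this bundle is locally trivial. Fix $\delta_0 \in \Delta$ and a point $x_0 \in \xi^{-1}(\delta_0)$. The standard argument: choose a system of $n^2$ matrix units in $\pi_{x_0}(S) = \mathrm{M}_n(\mathbb C)$, lift them to self-adjoint / partial-isometry elements of $S$, and use the continuity in \eqref{cond-cont} together with functional calculus and the fact that close-to-matrix-unit systems can be polar-decomposed back to exact matrix unit systems, to obtain on a neighborhood $\mathcal O$ of $\delta_0$ a genuine system of matrix units $\{e_{ij}\}$ in $S|_{\mathcal O}$ (the restriction of $S$ to $\xi^{-1}(\mathcal O)$). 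Such a system of matrix units gives an isomorphism $S|_{\mathcal O} \cong \mathrm{M}_n(\mathrm{C}_0(\mathcal O))$, because the corner $e_{11} (S|_{\mathcal O}) e_{11}$ is then a commutative C*-algebra with spectrum $\mathcal O$ — here one uses \eqref{cond-sep} and \eqref{cond-full} to identify that spectrum with $\mathcal O$, exactly as in the ordinary Rokhlin/recursive-subhomogeneous picture.

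The third step is the globalization. Local triviality of an $\mathrm{M}_n$-bundle does not by itself give global triviality — the obstruction lies in $H^2(\Delta; \underline{\mathbb Z}^\times)$ or, more precisely, a Dixmier–Douady-type class. So the lemma as stated, with conclusion literally $S \cong \mathrm{M}_m(\mathrm{C}_0(\Delta))$ and $m = n$, cannot be true in full generality unless there is additional input; I would expect the intended proof to exploit the specific structure coming from Theorem \ref{Lin-Sub} — namely that $S$ sits inside $\mathrm{M}_n(\mathrm{C}_0(Z))$ with $Z$ a very concrete space (a disjoint union of pieces of Rokhlin towers) and $\xi$ the quotient by the explicit identifications in that theorem — which forces the bundle to be trivial (for instance because the gluing in Theorem \ref{Lin-Sub} is by permutation-type unitaries that can be untwisted, or because $Z \to \Delta$ admits a continuous section). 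Concretely: I would look for a continuous cross-section $s : \Delta \to Z$ of $\xi$; pulling back the tautological matrix units along $s$ and using \eqref{cond-cont} to see they vary continuously then yields a global system of matrix units in $S$, hence $S \cong \mathrm{M}_n(\mathrm{C}_0(\Delta))$ outright.

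The main obstacle, therefore, is the passage from local to global triviality: showing that the continuous field has no Dixmier–Douady obstruction. Everything else — the identification of $\Delta$ as $\mathrm{Prim}(S)$, the perturbation argument producing local matrix units, the fact that $m$ must equal $n$ — is routine continuous-field C*-algebra technology. I expect the resolution to come from the concrete origin of $(S, Z, \xi)$ in the Rokhlin decomposition of Theorem \ref{Lin-Sub}, where a continuous section of $\xi$ is available essentially by construction, so that in the application there is genuinely no twisting.
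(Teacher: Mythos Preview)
Your plan is far more elaborate than what is needed, and the ``main obstacle'' you identify --- the Dixmier--Douady obstruction to global triviality --- is a phantom created by overlooking what condition~\eqref{cond-cont} actually gives you. Take $\xi(x_1)=\xi(x_2)$ and apply \eqref{cond-cont} to the constant sequence $x_n=x_1$ converging (under $\xi$) to $\xi(x_2)$: you get $g(x_1)=g(x_2)$ for every $g\in S$. So each $g\in S$ factors through $\xi$ as a well-defined function $\tilde g:\Delta\to \mathrm{M}_n(\mathbb C)$, and \eqref{cond-cont} then says $\tilde g$ is continuous; a short compactness argument shows $\tilde g$ vanishes at infinity. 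Thus $f\mapsto\tilde f$ embeds $S$ as a sub-C*-algebra of the \emph{trivial} bundle algebra $\mathrm{M}_n(\mathrm{C}_0(\Delta))$, and there is no twisting to untwist. The paper's proof does exactly this, then observes that conditions~\eqref{cond-sep} and~\eqref{cond-full} make $S$ a \emph{rich} subalgebra of $\mathrm{M}_n(\mathrm{C}_0(\Delta))$ in Dixmier's sense, whence $S\cong\mathrm{M}_n(\mathrm{C}_0(\Delta))$ by the Stone--Weierstrass-type result (Dixmier, Proposition~11.1.6, or Kaplansky).

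So the lemma is true in full generality, with no appeal to the specific Rokhlin structure of Theorem~\ref{Lin-Sub} and no need for a continuous section of $\xi$. Your local-matrix-units perturbation argument and the subsequent patching are simply not needed: the global matrix units are already present because $S$ sits inside $\mathrm{M}_n(\mathrm{C}_0(\Delta))$ from the start. The moral is that hypothesis~\eqref{cond-cont} is strictly stronger than ``$S$ is a continuous field over $\Delta$'' --- it says the field is a subfield of the trivial one.
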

\begin{proof}
For each $f\in S$, define a function $\tilde{f}: \Delta\to \mathrm{M}_n(\Comp)$ by
$$\tilde{f}(z)=f(x),\quad\textrm{if $\xi(x)=z$}.$$
By Condition \eqref{cond-cont}, $\tilde{f}$ is well defined, and $\tilde{f}$ is continuous. Moreover, $\tilde{f}$ vanishes at infinity. As, if $z_i\in\Delta$ with $z_i\to\infty$, since $\xi$ is surjective, there are $x_i\in Z$ with $\xi(x_i)=z_i$. 
Then $x_i\to\infty$. Otherwise, there is a subsequence, say $(x_{i_k})$, converging to a point $x\in Z$. Since $\xi$ is continuous, one has that $z_{i_k}=\xi(x_{i_k})\to\xi(x)$, which contradicts the assumption $z_i\to\infty$. Hence $\tilde{f}(z_i)=f(x_i)\to 0$, and $\tilde{f}\in \mathrm{M}_n(\mathrm{C_0}(\Delta))$.

Moreover, it is clear that the map $f\to\tilde{f}$ is an injective homomorphism, and thus one can regard $S$ as a sub-C*-algebra of $\mathrm{M}_n(\mathrm{C_0}(\Delta))$. It follows from Conditions \eqref{cond-sep} and \eqref{cond-full} that $S$ is a rich sub-C*-algebra of $\mathrm{M}_n(\mathrm{C_0}(\Delta))$ in the sense of Dixmier (11.1.1 of \cite{Dix-Book}), and therefore $S=\mathrm{M}_n(\mathrm{C}_0(\Delta))$ by Proposition 11.1.6 of \cite{Dix-Book} (or, it follows from Theorem 7.2 of \cite{Kap-SCalg}).
\end{proof}

\begin{rem}
The space $\Delta$ of Lemma \ref{cont-field} is automatically locally compact Hausdorff. 
\end{rem}

\begin{thm}\label{lc-app}
Let $X$ be an infinite compact metrizable space, and let $\sigma$ be a minimal homeomorphism. Suppose that $(X, \sigma)$ has mean dimension zero. Let $$\{f_1, f_2, ..., f_n, g_1, g_2, ..., g_m\}\subseteq \mathrm{C}(X)$$ with $g_i(W)=\{0\}$, $i=1, ..., m$, for some open set $W$ containing some $y\in X$. Then, for any $\eps>0$, there is a closed neighbourhood $Y$ of $y$ contained in $W$ such that the finite subset 
$$\{f_1, f_2, ..., f_n, ug_1, ug_2, ..., ug_m\}$$
of $A_Y$, where $u$ is the canonical unitary of the crossed product, is approximated to within $\eps$ by a subhomogeneous C*-algebra $S$ in $A_Y$ with dimension ratio at most $\eps$.
\end{thm}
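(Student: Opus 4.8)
The plan is to produce $S$ inside Q.~Lin's description of $A_Y$ from Theorem~\ref{Lin-Sub}, by pulling each summand $\mathrm{M}_{J_k}(\mathrm{C}(\overline{Z_k}))$ back along a continuous map of $\overline{Z_k}$ onto a space whose dimension is small compared with $J_k$; the dimension drop is supplied by the small boundary property. Since $(X,\sigma)$ has mean dimension zero it has the SBP, so Proposition~\ref{small-intsct} is available. Fix an open set $W_0$ with $y\in W_0\subseteq\overline{W_0}\subseteq W$, and choose a finite open cover $\alpha=\{U_1,\dots,U_L\}$ of $X$ that refines $\{W,\ X\setminus\overline{W_0}\}$ and on each member of which every $f_i$ and every $g_i$ varies by less than $\eps/2$; let $d<\infty$ be the dimension of the nerve of $\alpha$ (if $d=0$ the statement is immediate, so assume $d\ge 1$). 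Apply Proposition~\ref{small-intsct} to $\alpha$ and to $\eps':=\eps/(2d)$ to get a partition of unity $\phi_1,\dots,\phi_L$ with $\mathrm{supp}\,\phi_l\subseteq U_l$ and $\mathrm{ocap}(E)<\eps'$, where $E:=\bigcup_l\phi_l^{-1}((0,1))$. Put $\psi:=(\phi_1,\dots,\phi_L)\colon X\to\Delta$ with $\Delta:=\psi(X)$ lying in the geometric realization of the nerve, so $\dim\Delta\le d$ and $\psi$ sends each point of $X\setminus E$ to a vertex. Using $\mathrm{ocap}(E)<\eps'$, fix $N_0$ with $\sup_{z\in X}\sum_{j=0}^{N-1}\chi_E(\sigma^j z)<\eps' N$ for all $N\ge N_0$; then, using that the first return time of $Y$ tends to infinity as $Y$ shrinks ($\sigma$ minimal, $X$ infinite), choose a closed neighbourhood $Y\subseteq W_0$ of $y$ with nonempty interior whose smallest return time $J_1$ satisfies $J_1\ge N_0$, and let $Z_1,\dots,Z_K$, $J_1<\cdots<J_K$ be the resulting Rokhlin data.

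For each $k$ set $\Psi_k\colon\overline{Z_k}\to\Delta^{J_k}$, $\Psi_k(x)=(\psi(\sigma x),\psi(\sigma^2 x),\dots,\psi(\sigma^{J_k}x))$, and $\Delta_k:=\Psi_k(\overline{Z_k})$, a compact set. Since $J_k\ge N_0$, for every $x$ the segment $\sigma x,\dots,\sigma^{J_k}x$ meets $E$ at most $\eps' J_k$ times, so every point of $\Delta_k$ has all but at most $\eps' J_k$ of its $J_k$ blocks equal to a vertex of $\Delta$; hence $\Delta_k$ is covered by finitely many sets of the form $\Delta^m\times(\text{finite set})$ with $m\le\eps' J_k$, and $\dim\Delta_k\le\eps' d\,J_k=(\eps/2)J_k$. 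Let $S_k\subseteq\mathrm{M}_{J_k}(\mathrm{C}(\overline{Z_k}))$ be the $\mathrm{C}^*$-subalgebra of elements factoring continuously through $\Psi_k$, which is identified with $\mathrm{M}_{J_k}(\mathrm{C}(\Delta_k))$ (either because $\Psi_k$ is a quotient map onto $\Delta_k$, or via Lemma~\ref{cont-field} with $\xi=\Psi_k$, the unitary-equivalence relation on fibres being detected as in Lemma~\ref{uni-sep}), and set $S:=\big(\bigoplus_{k=1}^K S_k\big)\cap A_Y$ inside the identification of Theorem~\ref{Lin-Sub}. The defining matching relations of $A_Y$ descend to $S$: for a set $W'=\partial Z_k\cap Z_{t_1}\cap\sigma^{-J_{t_1}}(Z_{t_2})\cap\cdots$ with $J_{t_1}+\cdots+J_{t_s}=J_k$ and $x\in W'$, each of $\Psi_{t_1}(x),\Psi_{t_2}(\sigma^{J_{t_1}}x),\dots$ is a consecutive block-subtuple of $\Psi_k(x)$ (the return times partition $\{1,\dots,J_k\}$ into consecutive intervals), so the block-diagonal condition on $F_k|_{W'}$ becomes a condition on the representing functions over the compact set $\Psi_k(W')\subseteq\Delta_k$. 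Thus $S$ is a recursive subhomogeneous algebra with base spaces $\Delta_k$ and matrix sizes $J_k$; as the $J_k$ are distinct and $\dim\Delta_k/J_k\le\eps/2$, one has $\mathrm{Prim}_{J_k}(S)\subseteq\Delta_k$, whence $\mathrm{dimRatio}(S)\le\eps/2<\eps$.

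It remains to approximate the given elements. Put $\hat f_i:=\sum_l f_i(p_l)\phi_l$ and $\hat g_i:=\sum_l g_i(p_l)\phi_l$ in $\mathrm{C}(X)$, where $p_l\in U_l$ is chosen inside $W$ whenever $U_l\subseteq W$; then $\|f_i-\hat f_i\|<\eps/2$ and $\|g_i-\hat g_i\|<\eps/2$. Since $g_i$ vanishes on $W$, the surviving $\phi_l$ in $\hat g_i$ are supported in members of $\alpha$ disjoint from $\overline{W_0}$, so $\hat g_i$ vanishes on $W_0\supseteq Y$ and $u\hat g_i\in A_Y$, while $\hat f_i\in\mathrm{C}(X)\subseteq A_Y$. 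By \eqref{id-01} and \eqref{id-02}, under the identification of Theorem~\ref{Lin-Sub} the images of $\hat f_i$ and of $u\hat g_i$ have every matrix entry a coordinate functional of the relevant $\Psi_k$, so they lie in $\bigoplus_k S_k$; being elements of $A_Y$, they lie in $S$. Finally $\|f_i-\hat f_i\|<\eps/2$ and $\|ug_i-u\hat g_i\|=\|g_i-\hat g_i\|<\eps/2$, so $\{f_1,\dots,f_n,ug_1,\dots,ug_m\}$ is approximated to within $\eps$ by $S$, as required.

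I expect the main obstacle to be controlling $\mathrm{dimRatio}(S)$ itself, rather than that of the ambient $\bigoplus_k\mathrm{M}_{J_k}(\mathrm{C}(\Delta_k))$: a priori a $\mathrm{C}^*$-subalgebra could have larger dimension ratio, so the key point is that the matching relations of $A_Y$ — precisely because they involve the shifts $\sigma^{J_{t_1}+\cdots+J_{t_{j-1}}}$ with the return times summing to $J_k$ — pull back compatibly through the $\Psi_k$, so that $S$ is genuinely recursive subhomogeneous over the low-dimensional $\Delta_k$. The other delicate estimate is $\dim\Delta_k\le\eps' d\,J_k$, which is where the small boundary property (through Proposition~\ref{small-intsct}) and the growth of the first return time are used; the remaining verifications are routine.
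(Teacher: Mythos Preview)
Your overall strategy---use the small boundary property to obtain a partition of unity whose ``uncertain'' set $E$ has small orbit capacity, map each tower base into a low-dimensional target via the orbit values of $\psi$, and extract $S$ by pullback---is exactly the paper's. The dimension estimate $\dim\Delta_k\le\eps' d\,J_k$ and the approximation of $f_i,\,ug_i$ by $\hat f_i,\,u\hat g_i$ are fine.

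The gap is in the step you yourself flag: the claim that $S=(\bigoplus_k S_k)\cap A_Y$ is recursive subhomogeneous with base spaces $\Delta_k$ and matrix sizes $J_k$. The matching relations do descend along $\Psi_k$, but your map $\Psi_k$ need \emph{not} separate $Z_k$ from $\overline{Z_k}\setminus Z_k$. Concretely, if $x\in Z_k$ and $x'\in W'\subseteq\overline{Z_k}\setminus Z_k$ (with decomposition $J_{t_1}+\dots+J_{t_s}=J_k$) satisfy $\Psi_k(x)=\Psi_k(x')$, then every $(G_1,\dots,G_K)\in S$ has $G_k(\Psi_k(x))=G_k(\Psi_k(x'))$ block diagonal with blocks of sizes $J_{t_1},\dots,J_{t_s}$; in particular the $(J_{t_1}+1,J_{t_1})$ entry is forced to vanish, so $\pi_x|_S$ is reducible. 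Nothing in your choice of $\alpha$ (which refines only $\{W,\,X\setminus\overline{W_0}\}$) prevents such collisions: the values $\psi(\sigma^{J_{t_1}}x)$ and $\psi(\sigma^{J_{t_1}}x')$ can coincide even though $\sigma^{J_{t_1}}x'\in Y$ and $\sigma^{J_{t_1}}x\notin Y$. Once such collisions occur, the attaching map $\rho_k\colon S^{(k-1)}\to\mathrm{M}_{J_k}(\mathrm{C}(\Psi_k(\overline{Z_k}\setminus Z_k)))$ that an RSH structure would require is not well-defined (different preimages in different $W'$ impose distinct block structures at the same $z\in\Delta_k$), and you no longer know that $\mathrm{Prim}_{J_k}(S)$ sits inside $\Delta_k$ with the right topology. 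The reference to Lemma~\ref{uni-sep} does not help here, since that lemma needs \emph{strictly positive} subdiagonal entries, and your $S$ contains no element guaranteed to have that at every $x\in Z_k$.

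The paper repairs exactly this point by adjoining one more generator: a function $H\colon X\to[0,1]$ with $H^{-1}(0)=Y$ and $H\equiv 1$ outside a small neighbourhood $V\supseteq Y$ (Lemma~\ref{small-nbhd} is used only to keep the extra $H$-coordinates from spoiling the dimension estimate). The paper then takes $S=\mathrm{C}^*\{\phi_U,\,uH\}$ and enlarges your $\Psi_k$ to
\[
\xi_k(x)=\big(\Phi(\sigma x),\dots,\Phi(\sigma^{J_k}x),\,H(\sigma x),\dots,H(\sigma^{J_k-1}x)\big).
\]
Because $H$ vanishes exactly on $Y$, for $x\in Z_k$ one has $H(\sigma^i x)>0$ for $1\le i\le J_k-1$, so $\pi_x(uH)$ has all subdiagonal entries nonzero; this both makes $\pi_x|_S$ irreducible of dimension $J_k$ and, via Lemma~\ref{uni-sep}, forces $\xi_k$ to separate unitary equivalence classes. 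In particular $\xi_k(Z_k)\cap\xi_k(\overline{Z_k}\setminus Z_k)=\varnothing$, precisely the separation your $\Psi_k$ lacks. Lemma~\ref{cont-field} then identifies the $J_k$-subquotient of $S$ with $\mathrm{M}_{J_k}(\mathrm{C}_0(\xi_k(Z_k)))$, giving $\mathrm{dimRatio}(S)<\eps$. Your argument becomes correct once you incorporate $H$ (or any function vanishing exactly on $Y$) into both the generators of $S$ and the coordinates of $\Psi_k$.
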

\begin{proof}
Let $\eps>0$ be arbitrary. Choose a finite open cover 
$$\alpha=\{U_1, U_2, ..., U_{\abs{\alpha}}\}$$
of $X$ such that
\begin{equation}\label{eq-small-nbhd}
\abs{f_i(x)-f_i(x')} < \eps\quad\mathrm{and}\quad \abs{g_j(x)-g_j(x')} < \eps,\quad x, x'\in U_i,\  1\leq i\leq\abs{\alpha}.
\end{equation}
Since $(X, \sigma)$ is minimal and has mean dimension zero, it has SBP, and therefore by Proposition \ref{small-intsct}, there is a partition of unity $\{\phi_U;\ U\in\alpha \}$ subordinate to $\alpha$ and $T\in\mathbb N$ such that 
\begin{equation}\label{sm-fctn}
\frac{1}{N}(\chi_{E}(x)+\chi_{E}(\sigma(x))+\cdots+\chi_E(\sigma^{N-1}(x)))<\frac{\eps}{\abs{\alpha}+1},\quad x\in X,\  N\geq T,
\end{equation}
where $E=\bigcup_{U\in\alpha}\phi_U^{-1}((0, 1))$.

Choose the closed neighbourhood $Y$ of $y$ in $W$ as follows: the Rokhlin partition
$$\{\{Z_1, \sigma(Z_1), ..., \sigma^{J_1-1}(Z_1)\}, ..., \{Z_k, \sigma(Z_k), ..., \sigma^{J_k-1}(Z_k)\}\}$$
corresponding as in Section \ref{Strut-AY} to $Y$ should satisfy
$$J_1\geq\max\{\frac{\abs{\alpha}+1}{\eps}, T\}.$$
By Lemma \ref{small-nbhd}, there is an open set $V$ such that $Y\subseteq V$, and 
for any $1\leq k\leq K$,
\begin{equation}\label{sm-H}
\frac{1}{J_k}(\chi_{V}(x)+\chi_{V}(\sigma(x))+\cdots+\chi_V(\sigma^{J_k-1}(x)))\leq \frac{1}{J_1}<\frac{\eps}{\abs{\alpha}+1},\quad x\in Z_{k}.
\end{equation}
Choose a continuous function $H: X\to [0, 1]$ such that
$$H^{-1}(0)=Y\quad\textrm{and}\quad H^{-1}(1)\supseteq (X\setminus V).$$
Since $Y\subseteq W$, without loss of generality, we may assume that $V\subseteq W$, and then
$$Hg_j=g_j,\quad 1\leq j\leq m.$$

Let us show that the sub-C*-algebra 
$$S:=\textrm{C*}\{\phi_U, uH;\ U\in\alpha \}\subseteq A_Y,$$
together with the closed set $Y$,
satisfies the conditions of the theorem.

For each $U\in\alpha$, pick a point $x_U\in U$. Then, by \eqref{eq-small-nbhd}, for each $f_i$, $1\leq i\leq n$, 
\begin{equation*}
\|{f_i-\sum_{U\in \alpha} f_i(x_{U})\phi_U}\| \leq  \sup_{x\in X}\sum_{U\in \alpha} \abs{f(x)-f_i(x_{U})}\phi_U(x)<\eps;
\end{equation*}
and for each $g_j$, $1\leq j\leq m$, one has
\begin{eqnarray*}
\|{ug_j-uH\sum_{U\in \alpha} g_j(x_{U})\phi_U}\| &=&\|{uHg_j-uH\sum_{U\in \alpha} g_j(x_{U})\phi_U}\|\\
&\leq &\|{g_j-\sum_{U\in \alpha} g_j(x_{U})\phi_U}\|\\
&<&\eps.
\end{eqnarray*}
This shows the approximate inclusion
$$\{f_1, f_2, ..., f_n, ug_1, ug_2, ..., ug_m\} \subseteq_\eps S.$$

Finally, let us show that $\mathrm{dimRatio}(S)<\eps$. For each $1\leq k\leq K$, consider the algebra
$$\textrm{M}_{J_k}(\mathrm{C}(\overline{Z_{k}}))$$
of Theorem \ref{Lin-Sub},
and consider the map
$$\xi_k: \overline{Z_{k}}\to\mathbb R^{(\abs{\alpha}+1)J_k-1}$$ defined by
\begin{equation}\label{defn-xi}
\xi_k(x) \mapsto ((\Phi\circ\sigma(x), \Phi\circ\sigma^2(x),..., \Phi\circ\sigma^{J_k}), (H\circ\sigma(x), ..., H\circ\sigma^{J_k-1}(x))),
\end{equation}
where the map $\Phi: \overline{Z_k}\to\Real^{\abs{\alpha}}$ is defined by $$\Phi=\bigoplus_{U\in\alpha}\phi_U.$$

By \eqref{sm-fctn} and \eqref{sm-H}, the image of $Z_k$ under $\xi_k$ is contained in the set
$$\{(t_1, t_2, ..., t_{(\abs{\alpha}+1)J_k-1})\in [0, 1]^{(\abs{\alpha}+1)J_k-1};\ \textrm{at most ${\eps} J_k$ of the $t_i$ are not $0$ or $1$}\},$$
which has dimension at most $\eps J_k-1$ (as it is a union of simplices with at most $\eps J_k$ vertices). Therefore, $\xi_k(Z_k)$ has dimension at most $\eps J_k$. For convenience,  write $\xi_k(Z_k)=\Delta_k$. We have
$$\mathrm{dim}(\Delta_k)<\eps J_k.$$

For each $x\in Z_k$, the evaluation map $\pi_x$ on $A_Y$ is an irreducible representation of $A_Y$ with dimension $J_k$. Consider the restriction of $\pi_x$ to $S$. Note that for any $x_1, x_2\in Z_{k}$, if
$$\xi_k(x_1)=\xi_k(x_2),$$
 then, by the definition of $\xi_k$, one has
$$\phi_U\circ\sigma^{i}(x_1)=\phi_U\circ\sigma^{i}(x_2)\quad\textrm{and}\quad H\circ\sigma^{j}(x_1)=H\circ\sigma^{j}(x_2),$$
where $U\in \alpha, 1\leq i\leq J_k, 1\leq j\leq J_k-1.$
By \eqref{id-01} and \eqref{id-02} of Theorem \ref{Lin-Sub}, one has that 
$$\pi_{x_1}(\phi_U)=\pi_{x_2}(\phi_U)\quad\textrm{and}\quad \pi_{x_1}(uH)=\pi_{x_2}(uH),\quad U\in \alpha.$$
Since $S$ is the sub-C*-algebra generated by $\phi_U$, $U\in {\alpha}$, and $uH$, this shows that
\begin{equation}\label{eq-rep}
\pi_{x_1}|_S=\pi_{x_2}|_S.
\end{equation}
Moreover, for any $g\in S$, $x\in Z_k$, and any sequence $(x_n)$ in $Z_k$, if $\xi_k(x_n)\to\xi_k(x)$, then 
\begin{equation}\label{thm-cond-02-pre}
\pi_{x_n}(g) \to \pi_x(g).
\end{equation}

For any $x\in Z_{k}$, the representation 
$\pi_x|_S$ is irreducible on $S$ (hence has dimension $J_k$). In fact, let us consider the image of $uH$ under $\pi_x$, which is
\begin{displaymath}
w:=\left(
\begin{array}{cccc}
0 &    &  & \\
H(\sigma(x)) & 0  &  &  \\
 & \ddots  & \ddots & \\
 &    & H(\sigma^{J_k-1}(x)) & 0
\end{array}
\right)\in \pi_x(S).
\end{displaymath}
Noting that $H^{-1}(0)=Y$ and $x\in Z_k$, we have 
\begin{equation}\label{non0}
H(\sigma^i(x))\neq 0, \quad  1\leq i \leq J_k-1.
\end{equation}
Then the C*-algebra generated by $w$ is the full matrix algebra $\mathrm{M}_{J_k}(\mathbb C)$, and the restriction of $\pi_x$ to $S$ must be irreducible. In particular,
\begin{equation}\label{thn-cond-03-pre}
\pi_x(S)=\mathrm{M}_{J_k}(\Comp).
\end{equation}
Therefore, the dimension of an irreducible representation of $S$ must be $J_k$ for some $k$, and each irreducible representation of $S$ with dimension $J_k$ is the restriction of $\pi_x$ for some $x\in Z_{k}$. 

Let $x_1, x_2\in Z_k$. 
Let us show that 
\begin{equation}\label{lem-cond-01-pre}
\textrm{$\pi_{x_1}|_S$ and $\pi_{x_2}|_S$ are unitarily equivalent if and only if $\xi_k(x_1)=\xi_k(x_2)$}.
\end{equation}

If $\xi_k(x_1)=\xi_k(x_2)$, then, as shown above, 
$$\pi_{x_1}|_{S}=\pi_{x_2}|_S.$$
In particular, $\pi_{x_1}|_S$ and $\pi_{x_2}|_S$ are unitarily equivalent.

Now, assume that $\pi_{x_1}|_S$ and $\pi_{x_2}|_S$ are unitarily equivalent. Pick $\phi_U$, and consider the pair $(\phi_U, uH)$. Again, by  \eqref{id-01} and \eqref{id-02} of Theorem \ref{Lin-Sub}, we have 
\begin{displaymath}
\pi_{x_1}(\phi_U)=
\left(
\begin{array}{ccc}
\phi_U(\sigma(x_1)) & & \\
 & \ddots &\\
 & & \phi_U(\sigma^{J_k}(x_1))
\end{array}
\right),\ \
\pi_{x_1}(uH)=
\left(
\begin{array}{cccc}
0 & & & \\
H(\sigma(x_1))& 0& & \\
&  \ddots& \ddots &\\
& & H(\sigma^{J_k-1}(x_1)) & 0 
\end{array}
\right),
\end{displaymath}
and
\begin{displaymath}
\pi_{x_2}(\phi_U)=
\left(
\begin{array}{ccc}
\phi_U(\sigma(x_2)) & & \\
 & \ddots &\\
 & & \phi_U(\sigma^{J_k}(x_2))
\end{array}
\right),\ \
\pi_{x_2}(uH)=
\left(
\begin{array}{cccc}
0 & & & \\
H(\sigma(x_2))& 0& & \\
&  \ddots& \ddots &\\
& & H(\sigma^{J_k-1}(x_2)) & 0 
\end{array}
\right).
\end{displaymath}
Since $\pi_{x_1}$ and $\pi_{x_2}$ are assumed to be unitarily equivalent, the pair of matrices $(\pi_{x_1}(\phi_U), \pi_{x_1}(uH))$ is unitarily equivalent to the pair of matrices $(\pi_{x_2}(\phi_U), \pi_{x_2}(uH))$. By \eqref{non0}, we may apply Lemma \ref{uni-sep} to obtain
$$\phi_U(\sigma^i(x_1))= \phi_U(\sigma^i(x_2))\quad\textrm{and}\quad H(\sigma^j(x_1))= H(\sigma^j(x_2)),\quad 1\leq i\leq J_k,\ 1\leq j\leq J_k-1.$$ Applying this argument for all $U\in \alpha$, we have that
$$\phi_U(\sigma^i(x_1))= \phi_U(\sigma^i(x_2))\quad\textrm{and}\quad H(\sigma^j(x_1))= H(\sigma^j(x_2)),\quad U\in\alpha,\ 1\leq i\leq J_k,\ 1\leq j\leq J_k-1,$$
and this implies (by the construction of the map $\xi_k$; see \eqref{defn-xi})
$$\xi_k(x_1)=\xi_k(x_2).$$
This proves the assertion.

Since any irreducible representation of $S$ is contained in a irreducible representation of $A_Y$, and $\{\pi_x;\  x\in Y\}$ are all of the irreducible representations of $A_Y$, the dimensions of the irreducible representations of $S$ have to be $J_1, J_2, ..., J_K$, and the map $\xi$ induces a bijection between $\mathrm{Prim}_{J_k}(S)$ and $\Delta_k$ for each $1\leq k\leq K$. Then the subquotient with $J_k$-dimensional representations of $S$, denoted by $S_k$, is a sub-C*-algebra of the subquotient with $J_k$-dimensional representations of $A_Y$, which is canonically isomorphic to $\mathrm{M}_{J_k}(\mathrm{C}_0(Z_k))$. By \eqref{lem-cond-01-pre}, for any $x_1, x_2\in Z_k$,
\begin{equation}\label{lem-cond-01}
\textrm{$\pi_{x_1}|_{S_k}$ is unitarily equivalent to $\pi_{x_2}|_{S_k}$ if and only if $\xi_k(x_1)=\xi_k(x_2)$}.
\end{equation}
By \eqref{thm-cond-02-pre}, one has that 
for any $g\in S_k$, any $x\in Z_k$, and any sequence $(x_n)$ in $Z_k$, if $\xi_k(x_n)\to\xi_k(x)$,  
\begin{equation}\label{lem-cond-02}
\pi_{x_n}(g) \to \pi_x(g).
\end{equation}
Therefore, the conditions of Lemma \ref{cont-field} are satisfied for the sub-C*-algebra $S_k$ of $\mathrm{M}_{J_k}(\mathrm{C}_0(Z_k))$, and it follows that
$$S_k\cong\mathrm{M}_{J_k}(\mathrm{C}_0(\Delta_k)).$$
This implies that  $$\mathrm{Prim}_{J_k}(S)=\mathrm{Prim}(S_k)=\Delta_k,$$ and hence
$$\mathrm{dim}(\mathrm{Prim}_{J_k}(S))= \mathrm{dim}(\Delta_k)<\eps J_k.$$ 
In particular, 
$$\textrm{dimRatio}(S)<\eps,$$
as desired.
\end{proof}


\begin{thm}\label{lc-app-w}
Let $X$ be an infinite metrizable compact space, and let $\sigma: X\to X$ be a minimal homeomorphism. If $(X, \sigma)$ has mean dimension zero, then the C*-algebra $A_y$ is a locally approximately subhomogeneous C*-algebra with slow dimension growth.
\end{thm}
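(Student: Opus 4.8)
The plan is to read the statement off from Theorem \ref{lc-app} together with the fact that $A_y$ is generated as a C*-algebra by the elements $f$ and $ug$ with $f, g \in \mathrm{C}(X)$, $g(y) = 0$. What has to be checked is that for every finite subset $F \subseteq A_y$ and every $\eps > 0$ there is a subhomogeneous sub-C*-algebra $S \subseteq A_y$ with $\mathrm{dimRatio}(S) < \eps$ and $F \subseteq_\eps S$. Since the $*$-algebra $\mathcal B$ generated by $\{f, ug : g(y) = 0\}$ is dense in $A_y$ (equivalently, $A_y = \varinjlim A_{Y_i}$ as noted in Section \ref{Strut-AY}), I would first perturb $F$ slightly into $\mathcal B$, so that it becomes a finite set of fixed $*$-polynomials in finitely many generators $f_1, \dots, f_n, ug_1, \dots, ug_m$ with $g_j(y) = 0$.

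Next I would bridge the gap between the hypothesis $g_j(y) = 0$ available here and the stronger hypothesis of Theorem \ref{lc-app}, which asks the $g_j$ to vanish on an \emph{open} neighborhood of $y$. Fixing a small open neighborhood $W$ of $y$ and a continuous $h \colon X \to [0, 1]$ equal to $0$ near $y$ and to $1$ off $W$, I replace $g_j$ by $g_j' = h g_j$. Then $g_j'$ vanishes near $y$, while $\norm{g_j - g_j'} = \norm{(1 - h) g_j} \le \sup_{W} \abs{g_j}$, which tends to $0$ as $W$ shrinks because $g_j(y) = 0$. Since the $*$-polynomials in play are fixed, replacing $ug_j$ by $ug_j'$ perturbs them by an amount controlled by their degrees and by $\max_j \norm{g_j}$, so by shrinking $W$ I may assume this perturbation is as small as desired.

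Finally I would apply Theorem \ref{lc-app} to the finite set $\{f_1, \dots, f_n, ug_1', \dots, ug_m'\}$ and a parameter $\eps' \in (0, \eps]$, obtaining a closed neighborhood $Y \subseteq W$ of $y$ and a subhomogeneous C*-algebra $S \subseteq A_Y \subseteq A_y$ with $\mathrm{dimRatio}(S) \le \eps'$ and with each $f_i$ and each $ug_j'$ within $\eps'$ of $S$. Because $S$ is a C*-algebra, each of the fixed $*$-polynomials then lies within $C\eps'$ of $S$, with $C$ depending only on those polynomials; choosing $\eps'$ small enough gives $F \subseteq_\eps S$ with $\mathrm{dimRatio}(S) < \eps$, which is the desired conclusion.

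I do not expect a serious obstacle here: all the substance sits in the already-proved Theorem \ref{lc-app}, and what remains is the cutoff trick matching its hypotheses and the routine propagation of $\eps'$-approximations of the generators through fixed $*$-polynomials. The one point deserving attention is the precise reading of ``locally approximately subhomogeneous with slow dimension growth'': I am using its local form (the displayed approximation property above). If instead an honest inductive-limit presentation $A_y = \varinjlim S_\ell$ with $\mathrm{dimRatio}(S_\ell) \to 0$ is wanted, one arranges the successive $S$'s to be nested by a standard diagonal argument exploiting separability of $A_y$; for the subsequent use of this result via the large subalgebra machinery, the local form suffices.
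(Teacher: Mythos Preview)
Your proposal is correct and takes essentially the same approach as the paper, which records only the single line ``This follows directly from Theorem \ref{lc-app}.'' Your unpacking---reducing to finitely many generators, using a cutoff (or equivalently the inductive limit $A_y=\varinjlim A_{Y_i}$) to arrange that the $g_j$ vanish on an open neighborhood of $y$, and then invoking Theorem \ref{lc-app} with a suitably small $\eps'$---is exactly the content behind that one sentence.
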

\begin{proof}
This follows directly from Theorem \ref{lc-app}.
\end{proof}

\begin{thm}\label{Z-stb}
Let $X$ be an infinite metrizable compact space, and let $\sigma: X\to X$ be a minimal homeomorphism. If $(X, \sigma)$ has mean dimension zero, then the C*-algebra $A=\mathrm{C}(X)\rtimes_{\sigma}\mathbb Z$ absorbs the Jiang-Su algebra $\mathcal Z$ tensorially.
\end{thm}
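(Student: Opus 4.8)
The plan is to transfer Jiang--Su stability from the large subalgebra $A_y$ to $A$ itself, using the apparatus recalled in Section \ref{Strut-AY}. Since $X$ is infinite and $\sigma$ is minimal, $A=\mathrm C(X)\rtimes_\sigma\mathbb Z$ is simple, separable, unital and nuclear (nuclearity because $\mathrm C(X)$ is nuclear and $\mathbb Z$ is amenable). The subalgebra $A_y$ is unital (it contains $\mathrm C(X)$, hence the unit of $A$) and separable, and being an inductive limit of the subhomogeneous algebras $A_{Y_i}$ of Theorem \ref{Lin-Sub} it is nuclear; moreover a centrally large subalgebra of a simple C*-algebra is again simple, by Phillips. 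By the transfer result quoted at the end of Section \ref{Strut-AY} (a nuclear centrally large sub-C*-algebra that absorbs $\mathcal Z$ forces the ambient algebra to absorb $\mathcal Z$), together with the Archey--Phillips theorem that $A_y$ is centrally large in $A$, it therefore suffices to prove that $A_y\cong A_y\otimes\mathcal Z$.

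For this I would appeal to Theorem \ref{lc-app-w}: $A_y$ is a simple, separable, unital, locally approximately subhomogeneous C*-algebra with slow dimension growth --- indeed Theorem \ref{lc-app} shows that every finite subset of $A_y$ is approximated, within any prescribed tolerance, by a subhomogeneous subalgebra of $A_y$ whose dimension ratio is as small as we please. Feeding this into the Toms--Winter structure theorem, to the effect that a simple separable unital ASH C*-algebra with slow dimension growth tensorially absorbs the Jiang--Su algebra, yields $A_y\cong A_y\otimes\mathcal Z$, and hence $A\cong A\otimes\mathcal Z$ by the previous paragraph.

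Thus, granting the results already established or cited in the paper, the proof of this theorem is essentially a bookkeeping assembly. The genuine obstacle --- and the place where the hypothesis of mean dimension zero is actually used --- is Theorem \ref{lc-app} (equivalently Theorem \ref{lc-app-w}): the dynamical chain ``mean dimension zero $\Rightarrow$ small boundary property $\Rightarrow$ the orbit-capacity estimate of Proposition \ref{small-intsct}'' is precisely what forces the number of ``free'' coordinates of the coordinate maps $\xi_k$ to be a vanishingly small fraction of the tower heights $J_k$, and hence forces the dimension ratios of the approximating subhomogeneous algebras to tend to zero. The only residual care needed at this final step is to confirm that the form of ``local ASH with slow dimension growth'' delivered by Theorem \ref{lc-app} (approximating subalgebras subhomogeneous, sitting inside $A_y$, with dimension ratio tending to $0$) matches the hypotheses of the $\mathcal Z$-stability theorem invoked, and to have recorded nuclearity and simplicity of $A_y$ as above.
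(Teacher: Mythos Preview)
Your proposal is correct and follows the same route as the paper: establish $\mathcal Z$-stability of $A_y$ from Theorem~\ref{lc-app-w}, then transfer to $A$ via the Archey--Phillips centrally-large-subalgebra machinery. The only refinement the paper makes at the step you flag as needing care is to replace your appeal to a generic ``Toms--Winter structure theorem'' by a concrete two-step citation: Lemmas 5.8 and 5.10 of \cite{Niu-MD} give that the Cuntz semigroups of $A_y$ and $A_y\otimes\mathcal Z$ coincide (this is where the \emph{local} subhomogeneous approximation with arbitrarily small dimension ratio is fed in), and then Corollary 7.4 of \cite{Winter-Z-stable-02} converts this into $A_y\cong A_y\otimes\mathcal Z$.
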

\begin{proof}
By Theorem \ref{lc-app-w}, the C*-algebra $A_y$ is locally approximated by subhomogeneous C*-algebras with arbitrarily small dimension ratio. By Lemma 5.8 of \cite{Niu-MD}, the C*-algebra $A_y$ has strict comparison of positive elements. On using Lemma 5.10 of \cite{Niu-MD} in place of Theorem 3.4 of \cite{Toms-SDG}, the same argument as that of Theorem 1.2 of \cite{Toms-SDG} shows that the Cuntz semigroups of $A_y$ and $A_y\otimes\mathcal Z$ are isomorphic, and therefore $A_y\cong A_y\otimes\mathcal Z$ by Corollary 7.4 of \cite{Winter-Z-stable-02}. Since, by Theorem \ref{LALG} (see \cite{A-NCP-LAlg}), $A_y$ is centrally large in $A$ in the sense of D.~Archey and N.~C.~Phillips, by Theorem \ref{A-NCP-Z} (see \cite{A-NCP-LAlg-Z}) the nuclear C*-algebra $A$ also satisfies $A\cong A\otimes\mathcal Z$.
\end{proof}


\begin{rem}
Since this paper was first posted on arXiv, it was shown in \cite{ENST-ASH} that the C*-algebra $\mathrm{C}(X)\rtimes_\sigma\Int$ is always rationally locally approximately subhomogeneous for a minimal dynamical system $(X, \sigma)$, and the Jiang-Su stable rationally approximately subhomogeneous C*-algebras have also been classified (they were shown to have finite nuclear dimension in \cite{ENST-ASH}, and hence to be classified in \cite{EGLN-ASH} (using \cite{GLN-TAS}); this classification result was vastly generalized in  \cite{EN-K0-Z} and \cite{EGLN-Dr}). (The special case of crossed products was also considered in  \cite{Lin-Dyn} and \cite{Karen-sphere}; in \cite{Lin-Dyn} the classification of the Jiang-Su stabilizations of the crossed products---in particular, as a consequence of the present paper, the mean dimension zero crossed products themselves---was obtained, using \cite{ENST-ASH} and \cite{GLN-TAS}.) In other words, one has the following corollary. 
\end{rem}

\begin{cor}
Let $X$ be an infinite metrizable compact space, and let $\sigma: X\to X$ be a minimal homeomorphism.
If $(X, \sigma)$ has mean dimension zero, then $\mathrm{C}(X)\rtimes_\sigma\Int$ is classifiable and in fact is an ASH algebra. In particular, if the minimal system $(X, \sigma)$ has finite entropy or countably many ergodic measures, the C*-algebra $\mathrm{C}(X)\rtimes_\sigma\Int$ is classifiable and is an ASH algebra.
\end{cor}

\section{Tensor products}

In this section, let us show that the tensor product of the crossed product C*-algebras of two or more minimal homeomorphisms is  $\mathcal Z$-stable (Theorem \ref{prod-prod}). In particular, this implies that the Toms growth rank (\cite{Toms-GR}) of any crossed product C*-algebra $\mathrm{C}(X)\rtimes_\sigma\Int$ with $(X, \sigma)$ minimal is at most two. It also shows that the examples of Giol and Kerr (\cite{GK-Dyn}) are prime among the C*-algebras of  minimal homeomorphisms.

\begin{thm}\label{lc-app-mdA}
Let $X$ be an infinite compact metrizable space, and let $\sigma$ be a minimal homeomorphism. Let $$\{f_1, f_2, ..., f_n, g_1, g_2, ..., g_m\}\subseteq \mathrm{C}(X),$$ with $g_i(W)=\{0\}$, $i=1, ..., m$, for some open set $W$ containing $y$. Then, for any $\eps>0$, there is $R>0$ such that for any $J\in\mathbb{N}$, there is a closed neighbourhood $Y$ of $y$ contained in $W$ such that the finite subset 
$$\{f_1, f_2, ..., f_n, ug_1, ug_2, ..., ug_m\}$$
of $A_Y$, where $u$ is the canonical unitary of the crossed product, is approximated to within $\eps$ by a subhomogeneous C*-algebra $S$ in $A_Y$ with dimension ratio at most $R$, and with the dimension of each irreducible representation at least $J$.
\end{thm}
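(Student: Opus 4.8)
The plan is to re‑run the proof of Theorem~\ref{lc-app} essentially verbatim, while simply \emph{omitting} the one step that used the mean‑dimension‑zero hypothesis, namely the appeal to the small boundary property through Proposition~\ref{small-intsct}. First I fix data depending only on $\eps$ (and on the given $X$, $\sigma$, $y$, $W$, and the functions): a finite open cover $\alpha=\{U_1,\dots,U_{\abs{\alpha}}\}$ of $X$ on each member of which every $f_i$ and every $g_j$ varies by less than $\eps$, together with \emph{any} partition of unity $\{\phi_U:U\in\alpha\}$ subordinate to $\alpha$. I then set $R:=\abs{\alpha}$. This is precisely where one pays for discarding the SBP: without Proposition~\ref{small-intsct} there is no control on the orbit capacity of the transition set $\bigcup_U\phi_U^{-1}((0,1))$, so the resulting dimension ratio will only be bounded by the fixed constant $R$ --- which depends on $\eps$, through $\alpha$, but crucially \emph{not} on $J$ --- rather than by $\eps$ itself.

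Given $J\in\mathbb N$, I use that $X$ is infinite and $\sigma$ minimal (so, as recalled in Section~\ref{Strut-AY}, the first return time $J_1$ can be made arbitrarily large by taking $Y$ small) to choose a closed neighbourhood $Y\subseteq W$ of $y$ whose Rokhlin tower bases $Z_1,\dots,Z_K$ and first return times $J_1<\dots<J_K$ satisfy $J_1\ge J$. Next pick a continuous $H:X\to[0,1]$ with $H^{-1}(0)=Y$ and $H\equiv 1$ on $X\setminus W$; then $uH\in A_Y$ (as $H|_Y=0$) and $Hg_j=g_j$ for every $j$ (as each $g_j$ vanishes on $W$, while $H\equiv 1$ off $W$). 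Put $S:=\textrm{C*}\{\phi_U,uH:U\in\alpha\}\subseteq A_Y$. The approximate‑inclusion estimates $\norm{f_i-\sum_{U\in\alpha}f_i(x_U)\phi_U}<\eps$ and $\norm{ug_j-uH\sum_{U\in\alpha}g_j(x_U)\phi_U}<\eps$ (for chosen points $x_U\in U$) are word‑for‑word those in the proof of Theorem~\ref{lc-app} and use only the choice of $\alpha$ and the relation $Hg_j=g_j$; hence $\{f_1,\dots,f_n,ug_1,\dots,ug_m\}\subseteq_\eps S$.

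It remains to identify $S$ as subhomogeneous and to bound its dimension ratio, and here everything except the final count is identical to the argument in Theorem~\ref{lc-app}. Defining $\xi_k:\overline{Z_k}\to\Real^{(\abs{\alpha}+1)J_k-1}$ exactly as in \eqref{defn-xi} (with $\Phi=\bigoplus_{U\in\alpha}\phi_U$), one shows, using \eqref{id-01} and \eqref{id-02} of Theorem~\ref{Lin-Sub} together with Lemma~\ref{uni-sep}, that every irreducible representation of $S$ is equivalent to a restriction $\pi_x|_S$ with $x\in Z_k$ for some $k$, of dimension $J_k$ (fullness $\pi_x(S)=\mathrm M_{J_k}(\Comp)$ coming from $H(\sigma^i(x))\neq 0$ for $1\le i\le J_k-1$, valid since $x\in Z_k$ and $H^{-1}(0)=Y$); that, for $x_1,x_2\in Z_k$, $\pi_{x_1}|_S$ and $\pi_{x_2}|_S$ are unitarily equivalent if and only if $\xi_k(x_1)=\xi_k(x_2)$; and --- via Lemma~\ref{cont-field} --- that $S_k\cong\mathrm M_{J_k}(\mathrm C_0(\Delta_k))$ where $\Delta_k:=\xi_k(Z_k)$, so $\mathrm{Prim}_{J_k}(S)=\Delta_k$. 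The sole new point is the dimension estimate for $\Delta_k$: whereas in Theorem~\ref{lc-app} the SBP forced at most $\eps J_k$ of the coordinates of $\xi_k(x)$ to lie in $(0,1)$, here one uses only that $\sum_{U\in\alpha}\phi_U\equiv 1$, so that $\xi_k(\overline{Z_k})$ lies in a product of $J_k$ copies of the standard $(\abs{\alpha}-1)$‑simplex (one for each iterate $\Phi\circ\sigma^i$, $1\le i\le J_k$) with $J_k-1$ unit intervals (for the iterates $H\circ\sigma^j$, $1\le j\le J_k-1$); hence $\dim(\Delta_k)\le(\abs{\alpha}-1)J_k+(J_k-1)<\abs{\alpha}J_k=RJ_k$. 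Therefore $\mathrm{dimRatio}(S)<R$, while every irreducible representation of $S$ has dimension $J_k\ge J_1\ge J$, as required.

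I expect no serious obstacle. The substance is just the observation that the simplex constraint $\sum_U\phi_U\equiv 1$ already caps the dimension ratio at $\abs{\alpha}$, a quantity fixed once $\eps$ (hence $\alpha$) is chosen and independent of $J$; so the SBP refinement needed in Theorem~\ref{lc-app} can be dropped outright, and one simultaneously gains the freedom to make the matrix sizes $J_1<\dots<J_K$ as large as desired --- at no cost, since this only requires $Y$ to be small. The two things to be careful about are routine: that $R$ depends on $\eps$ alone and not on $J$ (automatic, as $\alpha$ and $\{\phi_U\}$ are selected before $J$ is), and that shrinking $Y$ to enforce $J_1\ge J$ disturbs nothing else. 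The precise value of $R$ is immaterial; even the crude inclusion $\xi_k(\overline{Z_k})\subseteq[0,1]^{(\abs{\alpha}+1)J_k-1}$ would serve, with $R=\abs{\alpha}+1$.
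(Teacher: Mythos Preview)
Your proposal is correct and follows essentially the same approach as the paper's proof: fix the cover $\alpha$ and partition of unity from $\eps$ alone, set $R$ in terms of $\abs{\alpha}$, then for each $J$ shrink $Y$ to force $J_1\ge J$, build $S=\textrm{C*}\{\phi_U,uH\}$, verify the approximate inclusion, and bound $\dim(\Delta_k)$ by the trivial ambient dimension of the target of $\xi_k$. The only cosmetic differences are that the paper takes $R=\abs{\alpha}+1$ via the crude cube bound (which you also mention would suffice), introduces an auxiliary open set $V$ between $Y$ and $W$ when defining $H$, and records the choice of partition of unity after choosing $Y$---none of which affects the argument.
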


\begin{proof}
The proof is a slight modification of the proof of Theorem \ref{lc-app}.

Let $\eps>0$ be arbitrary. Choose a finite open cover 
$$\alpha=\{U_1, U_2, ..., U_{\abs{\alpha}}\}$$
of $X$ such that
\begin{equation}\label{eq-small-nbhd-1}
\abs{f_i(x)-f_i(y)} < \eps\quad\mathrm{and}\quad \abs{g_j(x)-g_j(y)} < \eps,\quad x, y\in U_i,\  1\leq i\leq\abs{\alpha}.
\end{equation}
Then $$R=\abs{\alpha}+1$$
is the desired constant.

Let $J \in\mathbb N$ be arbitrary. Choose the closed neighbourhood $Y$ of $y$ in $W$ as follows: the Rokhlin partition
$$\{\{Z_1, \sigma(Z_1), ..., \sigma^{J_1-1}(Z_1)\}, ..., \{Z_k, \sigma(Z_k), ..., \sigma^{J_k-1}(Z_k)\}\}$$
corresponding as in Section \ref{Strut-AY} to $Y$ should satisfy
$$J_1\geq J.$$

Pick an open set  $V$ such that $Y\subseteq V\subseteq W$, and pick a continuous function $H: X\to [0, 1]$ such that
$$H^{-1}(0)=Y\quad\textrm{and}\quad H^{-1}(1)\supseteq (X\setminus V).$$
Since $Y\subseteq W$, without loss of generality, we may assume that $V\subseteq W$, and then
$$Hg_j=g_j,\quad 1\leq j\leq m.$$

Choose a partition of unity $\{\phi_U: U\in\alpha\}$ subordinate to $\alpha$. 

Let us show that the sub-C*-algebra 
$$S:=\textrm{C*}\{\phi_U, uH;\ U\in\alpha \}\subseteq A_Y,$$
together with the closed set $Y$,
satisfies the conditions of the theorem (for $R$ and $J$).

For each $U\in\alpha$, pick a point $x_U\in U$. Then, by \eqref{eq-small-nbhd-1}, for each $f_i$, $1\leq i\leq n$,   
\begin{equation*}
\|{f_i-\sum_{U\in \alpha} f_i(x_{U})\phi_U}\| \leq  \sup_{x\in X}\sum_{U\in \alpha} \abs{f(x)-f_i(x_{U})}\phi_U(x)<\eps;
\end{equation*}
and for each $g_j$, $1\leq j\leq m$, 
\begin{eqnarray*}
\|{ug_j-uH\sum_{U\in \alpha} g_j(x_{U})\phi_U}\| & = & \|{uHg_j-uH\sum_{U\in \alpha} g_j(x_{U})\phi_U}\|\\
&\leq &\|{g_j-\sum_{U\in \alpha} g_j(x_{U})\phi_U}\|\\
&<&\eps.
\end{eqnarray*}
This shows that
$$\{f_1, f_2, ..., f_n, ug_1, ug_2, ..., ug_m\} \subseteq_\eps S.$$

Let us show that $\mathrm{dimRatio}(S)\leq R$. For each $1\leq k\leq K$, consider the algebra
$$\textrm{M}_{J_k}(\mathrm{C}(\overline{Z_{k}}))$$
of Theorem \ref{Lin-Sub},
and consider the map
$$\xi_k: \overline{Z_{k}}\to\mathbb R^{(\abs{\alpha}+1)J_k-1}$$ defined by
\begin{equation}\label{defn-xi-non-md0}
\xi_k(x) \mapsto ((\Phi\circ\sigma(x), \Phi\circ\sigma^2(x),..., \Phi\circ\sigma^{J_k}), (H\circ\sigma(x), ..., H\circ\sigma^{J_k-1}(x))),
\end{equation}
where the map $\Phi: \overline{Z_k}\to\Real^{\abs{\alpha}}$ is defined by $$\Phi=\bigoplus_{U\in\alpha}\phi_U.$$
It is clear that image of $\xi_k(Z_k)$ has dimension at most $(\abs{\alpha}+1)J_k-1$. Then an argument similar to that of Theorem \ref{lc-app} shows that the irreducible representations of $S$ have dimension
$$J_1<J_2<\cdots<J_K,$$
and that
$$\mathrm{dim}(\mathrm{Prim}_{J_k}(S))\leq (\abs{\alpha}+1)J_k-1.$$
Therefore,
$$\mathrm{dimRatio}(S)< \abs{\alpha}+1=R,$$
and the dimension of each irreducible representation of $S$ is at least $J$ (note that $J\leq J_1$).
\end{proof}

\begin{lem}\label{disj-prod}
Let $C$ and $S$ be subhomogeneous C*-algebras, and let $m_0, n_0, m_1, n_1$, and $d$ be natural numbers satisfying $m_0m_1=n_0n_1=d$ and $m_0\neq n_0$. Assume that $C$ has irreducible representations of dimensions $m_0$ and $n_0$, and that $S$ has irreducible representations of dimensions $m_1$ and $n_1$. Consider the subsets $E$ and $F$ of $X:=\mathrm{Prim}_d(C\otimes S)$ defined by
$$E=\{\rho: \rho=\pi_0\otimes\pi_1,\ \pi_0\in \mathrm{Prim}_{m_0}(C),\ \pi_1\in \mathrm{Prim}_{m_1}(S)\}$$
and
$$F=\{\rho: \rho=\pi_0\otimes\pi_1,\ \pi_0\in \mathrm{Prim}_{n_0}(C),\ \pi_1\in \mathrm{Prim}_{n_1}(S)\}.$$
Then the closures of $E$ and $F$ (in $X$) are disjoint. In particular, the sets $E$ and $F$ are relatively closed subsets of $X$.
\end{lem}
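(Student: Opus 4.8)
The plan is to work in the primitive ideal space $X = \mathrm{Prim}_d(C\otimes S)$, which we recall is homeomorphic to $\mathrm{Prim}_d$ of each subquotient; since $C$ and $S$ are subhomogeneous, an irreducible representation of $C\otimes S$ of dimension $d$ factors (up to unitary equivalence) as $\pi_0\otimes\pi_1$ with $\dim\pi_0 \cdot \dim\pi_1 = d$, and the only factorizations available are those coming from the dimension sets of $C$ and $S$ respectively. First I would describe the topology on $X$ explicitly near a point of $\overline{E}$: a net $\rho_\lambda = \pi_0^{(\lambda)}\otimes\pi_1^{(\lambda)}$ converges to $\rho = \pi_0\otimes\pi_1$ in $\mathrm{Prim}(C\otimes S)$ iff, roughly, $\ker\rho \supseteq \liminf \ker\rho_\lambda$; the key point is that the dimension function $\rho\mapsto\dim\rho$ on the primitive ideal space of a subhomogeneous C*-algebra is \emph{lower} semicontinuous (dimension can only drop in the limit, via a representation degenerating to a subrepresentation). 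So if $\rho_\lambda\in E$ has $\dim = d$ for all $\lambda$ and $\rho_\lambda \to \rho$ in $X = \mathrm{Prim}_d$, the limit still has dimension $d$; I must then show the limit lies in $E$, i.e. the factorization type is preserved.

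The main step: suppose for contradiction that $\overline{E}\cap\overline{F}\neq\O$, and pick $\rho$ in the intersection. Then $\rho$ is a limit of a net $\rho_\lambda = \pi_0^{(\lambda)}\otimes\pi_1^{(\lambda)}$ with $(\dim\pi_0^{(\lambda)},\dim\pi_1^{(\lambda)}) = (m_0,m_1)$ and also a limit of a net $\rho_\mu = \tau_0^{(\mu)}\otimes\tau_1^{(\mu)}$ with dimensions $(n_0,n_1)$. Restricting $\rho$ to the subalgebra $C\otimes 1 \subseteq M(C\otimes S)$ — or better, pushing through the quotient map to the subhomogeneous quotient $C_{\leq m_0}$ of $C$ (the ideal of elements vanishing on representations of dimension $> m_0$) — one finds that the component $\pi_0^{(\lambda)}$ of $\rho_\lambda$ lives in $\mathrm{Prim}_{m_0}(C)$, which is relatively closed in $\mathrm{Prim}_{\leq m_0}(C)$ (the set where dimension equals the maximum available $\leq m_0$ is open, being where a certain central element is invertible in the quotient; equivalently use that $\mathrm{Prim}_{m_0}(C)$ is the complement of the lower-dimensional subquotient's spectrum). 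The contradiction is then extracted by comparing the restriction of $\rho$ to $C$: along the $E$-net it "wants" to be an $m_0$-dimensional representation of $C$, along the $F$-net an $n_0$-dimensional one, and since $m_0\neq n_0$ these cannot both be limits of the restricted nets in $\mathrm{Prim}(C)$ — more precisely, the restriction $\rho|_C$ has a well-defined spectrum in $\mathrm{Prim}(C)$, and lower semicontinuity of $\dim$ together with $m_0\ne n_0$ forces one of the two to be an impossible limit. (Here one uses that $d/m_1 = m_0$ and $d/n_1 = n_0$ are determined, so the $S$-side dimension is also pinned down once the $C$-side is.)

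The cleanest way to package this, which I would adopt, is: the function $\rho\mapsto \dim(\rho|_C)$ (dimension of the $C$-component) is locally constant on each $\mathrm{Prim}_d$-stratum restricted to $E$ and to $F$ separately, because on $E$ it is identically $m_0$ and on $F$ identically $n_0$; and it is lower semicontinuous globally on $X$. A point $\rho\in\overline E$ then has $\dim(\rho|_C)\leq m_0$ but also, being a limit of dimension-$d$ representations staying in $\mathrm{Prim}_d$ (so no total dimension drop), $\dim(\rho|_C)$ cannot drop strictly below $m_0$ without $\dim(\rho|_S)$ exceeding $m_1$, which is impossible since $m_1$ is already the relevant maximal $S$-dimension in this product stratum — hence $\dim(\rho|_C)=m_0$. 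Symmetrically $\rho\in\overline F$ gives $\dim(\rho|_C)=n_0$. Since $m_0\neq n_0$, $\overline E\cap\overline F=\O$. The final sentence (that $E$ and $F$ are relatively closed in $X$) is then immediate: $\overline E\subseteq X$ and $\overline E\cap\overline F=\O$ gives $\overline E = \overline E\cap (X\setminus F)$... rather, one argues $\overline E \subseteq E$ by the same dimension-of-$C$-component computation applied to an arbitrary point of $\overline E$ (it has $C$-dimension $m_0$ and $S$-dimension $m_1$, hence factors correctly and lies in $E$), so $E$ is closed in $X$, likewise $F$.

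I expect the main obstacle to be the careful bookkeeping of how irreducible representations of the minimal tensor product $C\otimes_{\min} S$ of subhomogeneous algebras restrict to the factors, and in particular justifying rigorously that "no total dimension drop in $\mathrm{Prim}_d$" forces "no dimension drop in either component" — this needs the observation that $\dim(\rho|_C)\cdot\dim(\rho|_S)\geq \dim(\rho)$ fails in general but that for \emph{irreducible} $\rho$ of a tensor product of type I (indeed subhomogeneous) algebras one does have a genuine factorization $\rho = (\rho|_C)\otimes(\rho|_S)$ with the two factors irreducible, so dimensions multiply exactly; once that factorization is in hand the argument is elementary semicontinuity.
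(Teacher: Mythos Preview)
Your approach is correct but genuinely different from the paper's. The paper argues by contradiction using an explicit element-and-trace computation: assuming a sequence $\pi_k^0\otimes\pi_k^1$ from $E$ converges to $\pi_\infty^0\otimes\pi_\infty^1$ with $\pi_\infty^0\in\mathrm{Prim}_{n_0}(C)$, it observes that convergence in $\mathrm{Prim}_d$ forces $\mathrm{Tr}(\pi_k^0(c))\to(n_1/m_1)\mathrm{Tr}(\pi_\infty^0(c))$ for every $c\in C$ (by evaluating on $c\otimes 1_S$), and then produces a contradiction by choosing $c$ in the ideal of $C$ that kills all $m_0$-dimensional irreducibles but not $\pi_\infty^0$ (available since, WLOG, $m_0<n_0$). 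Your argument instead packages everything topologically: use that $\mathrm{Prim}(C\otimes S)\cong\mathrm{Prim}(C)\times\mathrm{Prim}(S)$ with continuous projections, apply lower semicontinuity of $\dim$ on each factor to get $\dim(\rho|_C)\le m_0$ and $\dim(\rho|_S)\le m_1$ for $\rho\in\overline{E}$, and then use $\dim(\rho|_C)\cdot\dim(\rho|_S)=d=m_0m_1$ to force equality. This is cleaner for the full statement $\overline{E}\cap\overline{F}=\varnothing$ (the paper's proof, as literally written, shows $\overline{E}\cap F=\varnothing$ and tacitly relies on the same kind of reasoning to upgrade), and it yields the relative closedness $\overline{E}=E$ directly. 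The paper's route has the advantage of being completely elementary---no appeal to the structure theory of $\mathrm{Prim}$ for tensor products beyond the factorization of irreducibles---whereas yours leans on the continuity of the projections $\mathrm{Prim}(C\otimes S)\to\mathrm{Prim}(C)$ and $\to\mathrm{Prim}(S)$, which you should cite explicitly (it is standard for type~I factors; see e.g.\ Dixmier). One small wording issue: your phrase ``$m_1$ is already the relevant maximal $S$-dimension in this product stratum'' is not the right justification for $\dim(\rho|_S)\le m_1$; the correct reason is simply lower semicontinuity of $\dim$ on $\mathrm{Prim}(S)$ applied to the $S$-components of the net, exactly parallel to the $C$-side.
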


\begin{proof}
Assuming the contrary, there would be $(\pi_k^{0}\otimes\pi_k^{1})$ converging to $\pi_\infty^{(0)}\otimes \pi_\infty^{(1)}$ in $\mathrm{Prim}_d(C\otimes S)$, where
$$\pi_k^0\in\mathrm{Prim}_{m_0}(C),\ \pi_k^1\in\mathrm{Prim}_{m_1}(S),\quad k=1, 2, ...$$
and
$$\pi_\infty^0\in\mathrm{Prim}_{n_0}(C),\ \pi_\infty^1\in\mathrm{Prim}_{n_1}(S),\quad k=1, 2, ...\ .$$
Without loss of generality, we may assume that $m_0<n_0$ (hence $m_1>n_1$).

For any $c\otimes s\in C\otimes S$, consider the sequence
$$\mathrm{Tr}((\pi_k^{0}\otimes\pi_k^{1})(c\otimes s))=\mathrm{Tr}(\pi_k^{0}(c)\otimes\pi_k^{1}(s))=\mathrm{Tr}(\pi_k^{0}(c))\cdot \mathrm{Tr}(\pi_k^{1}(s)),\quad k=1, 2, ...$$
Since $(\pi_k^{0}\otimes\pi_k^{1}) \to \pi_\infty^{(0)}\otimes \pi_\infty^{(1)}$ in $\mathrm{Prim}_d(C\otimes S)$, we have
$$\mathrm{Tr}(\pi_k^{0}(c))\cdot \mathrm{Tr}(\pi_k^{1}(s))\to \mathrm{Tr}(\pi_\infty^{0}(c))\cdot \mathrm{Tr}(\pi_\infty^{1}(s)),\quad k\to\infty.$$
Setting $s=1_S$, one has that 
\begin{equation}\label{tr-conv}
\mathrm{Tr}(\pi_k^{0}(c))\to \frac{n_1}{m_1}\cdot \mathrm{Tr}(\pi_\infty^{0}(c)),\quad k\to\infty,\ c\in C.
\end{equation}
Note that $(\pi_k^0)\subseteq \mathrm{Prim}_{m_0}(C)$, $\pi_\infty^0\in \mathrm{Prim}_{n_0}(C)$, and $m_0<n_0$. There is $c\in C$ such that 
$$\pi_\infty^0(c)\neq 0\quad\textrm{but}\quad \pi(c)=0,\quad \pi\in\mathrm{Prim}_{m_0}(C).$$ In particular,
$$\pi_k^0(c)=0,\quad k=1, 2, ...\ .$$
But this contradicts to \eqref{tr-conv}.
\end{proof}

\begin{lem}\label{dimR-prod}
Let $C$ and $S$ be unital subhomogeneous C*-algebras, and let $J$ be a natural number such that each irreducible representation of $C$ or $S$ has dimension at least $J$. Then
$$\mathrm{dimRatio}(C\otimes S)\leq \frac{\mathrm{dimRatio}(C)+\mathrm{dimRatio}(S)}{J}.$$
\end{lem}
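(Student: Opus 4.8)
The plan is to analyze the primitive ideal space of $C\otimes S$ degree by degree. Recall that for subhomogeneous (in fact, any type I) C*-algebras, $\mathrm{Prim}(C\otimes S)$ is naturally identified with $\mathrm{Prim}(C)\times\mathrm{Prim}(S)$, with an irreducible representation of the form $\pi_0\otimes\pi_1$ having dimension $\dim(\pi_0)\cdot\dim(\pi_1)$. So if the distinct irreducible-representation dimensions of $C$ are $p_1<\cdots<p_r$ and those of $S$ are $q_1<\cdots<q_s$, then the possible dimensions of irreducible representations of $C\otimes S$ are exactly the products $p_iq_j$. Fix a value $d$ occurring as such a product; I want to bound $\dim(\mathrm{Prim}_d(C\otimes S))/d$ by $(\mathrm{dimRatio}(C)+\mathrm{dimRatio}(S))/J$.

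The key structural input is Lemma \ref{disj-prod}: for a fixed $d$, the sets $E_{ij}=\{\pi_0\otimes\pi_1:\pi_0\in\mathrm{Prim}_{p_i}(C),\ \pi_1\in\mathrm{Prim}_{q_j}(S),\ p_iq_j=d\}$ are, for distinct index pairs $(i,j)$ (necessarily with $p_i\neq p_{i'}$ when $(i,j)\neq(i',j')$ and $p_iq_j=p_{i'}q_{j'}=d$), relatively closed and have pairwise disjoint closures in $\mathrm{Prim}_d(C\otimes S)$. Hence $\mathrm{Prim}_d(C\otimes S)$ is the disjoint union (as a topological space, since the pieces are simultaneously relatively open and relatively closed) of the finitely many $E_{ij}$, and therefore
$$\dim(\mathrm{Prim}_d(C\otimes S))=\max_{p_iq_j=d}\dim(E_{ij}).$$
Next, each $E_{ij}$ is homeomorphic to $\mathrm{Prim}_{p_i}(C)\times\mathrm{Prim}_{q_j}(S)$, so by the subadditivity of covering dimension on products of (metrizable, or at least well-behaved) spaces,
$$\dim(E_{ij})\le\dim(\mathrm{Prim}_{p_i}(C))+\dim(\mathrm{Prim}_{q_j}(S)).$$
Now divide by $d=p_iq_j$: using $q_j\ge J$ and $p_i\ge J$,
$$\frac{\dim(E_{ij})}{d}\le\frac{\dim(\mathrm{Prim}_{p_i}(C))}{p_i q_j}+\frac{\dim(\mathrm{Prim}_{q_j}(S))}{p_i q_j}\le\frac{1}{J}\cdot\frac{\dim(\mathrm{Prim}_{p_i}(C))}{p_i}+\frac{1}{J}\cdot\frac{\dim(\mathrm{Prim}_{q_j}(S))}{q_j}\le\frac{\mathrm{dimRatio}(C)+\mathrm{dimRatio}(S)}{J}.$$
Taking the maximum over $(i,j)$ with $p_iq_j=d$, and then over all admissible $d$, gives the claimed bound.

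The step I expect to require the most care is the identification of $\mathrm{Prim}_d(C\otimes S)$ as the disjoint union of clopen pieces $E_{ij}$: Lemma \ref{disj-prod} gives disjointness of closures for pairs with different $C$-degrees, but one must confirm that every pair $(i,j)$ with $p_iq_j=d$ does have distinct $p_i$'s (which holds because $p_i q_j = p_{i'}q_{j'}$ with $p_i=p_{i'}$ forces $q_j=q_{j'}$, hence $(i,j)=(i',j')$), so that Lemma \ref{disj-prod} applies to every pair of distinct pieces, making each $E_{ij}$ relatively closed and relatively open. A secondary point needing a word is the dimension inequality $\dim(A\times B)\le\dim A+\dim B$ for the relevant primitive ideal spaces; for compact metrizable spaces (which these are, the algebras being separable subhomogeneous) this is standard, and for general separable type I one can reduce to the metrizable case or cite the product inequality for paracompact spaces. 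Everything else is bookkeeping with the inequalities $p_i,q_j\ge J$.
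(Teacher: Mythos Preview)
Your proof is correct and follows essentially the same route as the paper: decompose $\mathrm{Prim}_d(C\otimes S)$ as the disjoint union of the pieces $\mathrm{Prim}_m(C)\times\mathrm{Prim}_n(S)$ with $mn=d$, invoke Lemma~\ref{disj-prod} to see that these pieces are relatively closed (hence the covering dimension is the maximum over the pieces), apply the product inequality for covering dimension, and then divide by $d=mn$ using $m,n\ge J$. One minor correction in your final paragraph: the spaces $\mathrm{Prim}_m(C)$ are only locally compact, not compact, but they are second countable Hausdorff (since $C$ is separable subhomogeneous), hence separable metrizable, which suffices for the product inequality $\dim(A\times B)\le\dim A+\dim B$.
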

\begin{proof}
Let $d$ be any natural number. Then
$$\mathrm{Prim}_d(C\otimes S)=\bigsqcup_{mn=d}(\mathrm{Prim}_{m}(C) \times \mathrm{Prim}_n(S)).$$
By Lemma \ref{disj-prod}, each $\mathrm{Prim}_{m}(C) \times \mathrm{Prim}_n(S)$ is relatively close in $\mathrm{Prim}_d(C\otimes S)$, and then we have
\begin{eqnarray*}
\mathrm{dim}(\mathrm{Prim}_d(C\otimes S)) &=&\max_{mn=d}\{\mathrm{dim}(\mathrm{Prim}_m(C)\times\mathrm{Prim}_n(S))\}\\
&\leq&\max_{mn=d}\{\mathrm{dim}(\mathrm{Prim}_m(C))+\mathrm{dim}(\mathrm{Prim}_n(S))\}.
\end{eqnarray*}
This implies
\begin{eqnarray*}
\frac{\mathrm{dim}(\mathrm{Prim}_d(C\otimes S))}{d} & \leq & \max_{d=mn}\{\frac{\mathrm{dim}(\mathrm{Prim}_m(C))+\mathrm{dim}(\mathrm{Prim}_n(S))}{d}\} \\
&=& \max_{d=mn}\{\frac{\mathrm{dim}(\mathrm{Prim}_m(C))}{m}\cdot\frac{1}{n}+\frac{\mathrm{dim}(\mathrm{Prim}_n(S))}{n}\cdot\frac{1}{m}\} \\
&\leq & \max_{d=mn}\{\mathrm{dimRatio}(C)\cdot\frac{1}{n}+\mathrm{dimRatio}(S)\cdot\frac{1}{m}\}\\
&\leq & \frac{\mathrm{dimRatio}(C)+\mathrm{dimRatio}(S)}{J},
\end{eqnarray*}
as desired.
\end{proof}

\begin{lem}\label{prod-sgd}
Let $A$ and $B$ be C*-algebras satisfying the following: For any finite subset $\mathcal F$ of $A$ (or $B$) and any $\eps>0$, there is $R>0$ (which depends on $\mathcal F$ and $\eps$) and a sequence of unital sub-C*-algebras $(S_n)$ such that 
\begin{enumerate}
\item each $S_n$ is a subhomogeneous C*-algebra with $\mathrm{dimRatio}(S_n)\leq R$,
\item each $S_n$ approximately contains $\mathcal F$ up to $\eps$, and
\item $d_n\to\infty$ as $n\to\infty$, where $d_n$ is the smallest dimension of the irreducible representations of $S_n$.
\end{enumerate} 
Then $A\otimes B$ can be locally approximated by subhomogeneous C*-algebras with slow dimension growth.
\end{lem}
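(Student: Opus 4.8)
The plan is to combine the two given sequences by the minimal tensor product and control the dimension ratio of the tensor product using Lemmas \ref{disj-prod} and \ref{dimR-prod}. The only real work is a routine reduction to elementary tensors.

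First I would reduce to finite sets of elementary tensors. Given a finite subset $\mathcal G$ of $A\otimes B$ and $\eps>0$, each $g\in\mathcal G$ lies within $\eps/3$ of a finite sum $\sum_{i=1}^{L} a_{g,i}\otimes b_{g,i}$ with $a_{g,i}\in A$ and $b_{g,i}\in B$, since the algebraic tensor product is dense. Collect all the left-hand (resp.\ right-hand) factors occurring into finite subsets $\mathcal F_A\subseteq A$ and $\mathcal F_B\subseteq B$, let $M\geq 1$ bound the norms of all these factors, and let $L$ bound the number of summands. A standard estimate shows that if $a_i'\in A$, $b_i'\in B$ satisfy $\norm{a_{g,i}-a_i'}<\delta$ and $\norm{b_{g,i}-b_i'}<\delta$, then $\norm{\sum a_{g,i}\otimes b_{g,i}-\sum a_i'\otimes b_i'}\leq L\delta(2M+\delta)$; so it suffices to fix $\delta>0$ with $L\delta(2M+\delta)<\eps/3$.

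Next I would apply the hypothesis to $\mathcal F_A\subseteq A$ with tolerance $\delta$, obtaining $R_A>0$ and a sequence of unital subhomogeneous sub-C*-algebras $(S_n^A)$ of $A$ with $\mathrm{dimRatio}(S_n^A)\leq R_A$, with $\mathcal F_A\subseteq_\delta S_n^A$ for all $n$, and with $d_n^A\to\infty$, where $d_n^A$ is the least dimension of an irreducible representation of $S_n^A$; symmetrically for $\mathcal F_B$ with tolerance $\delta$ one gets $R_B$, $(S_n^B)$ with $\mathrm{dimRatio}(S_n^B)\leq R_B$, $\mathcal F_B\subseteq_\delta S_n^B$, and $d_n^B\to\infty$. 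Put $S_n:=S_n^A\otimes S_n^B$ (the minimal tensor product, which is unambiguous since subhomogeneous C*-algebras are nuclear); it is a unital sub-C*-algebra of $A\otimes B$. It is again subhomogeneous, because $S_n^A$ is of type I and hence every irreducible representation of $S_n$ is of the form $\pi\otimes\rho$ with $\pi,\rho$ irreducible representations of $S_n^A,S_n^B$, of dimension the product of the two dimensions; in particular every irreducible representation of $S_n^A$ (resp.\ $S_n^B$) has dimension at least $J_n:=\min\{d_n^A,d_n^B\}$, and $J_n\to\infty$.

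Finally I would invoke Lemma \ref{dimR-prod} (with $J=J_n$, legitimate by the previous paragraph) to obtain
$$\mathrm{dimRatio}(S_n)\;\leq\;\frac{\mathrm{dimRatio}(S_n^A)+\mathrm{dimRatio}(S_n^B)}{J_n}\;\leq\;\frac{R_A+R_B}{J_n}\;\longrightarrow\;0.$$
For $n$ large enough that this quantity is $<\eps$, we still have $\mathcal F_A\subseteq_\delta S_n^A$ and $\mathcal F_B\subseteq_\delta S_n^B$, so each $\sum a_{g,i}\otimes b_{g,i}$ is within $\eps/3$ of an element of $S_n$, whence $\mathcal G\subseteq_\eps S_n$. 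Thus for every finite $\mathcal G\subseteq A\otimes B$ and every $\eps>0$ there is a subhomogeneous sub-C*-algebra of $A\otimes B$ of dimension ratio $<\eps$ containing $\mathcal G$ to within $\eps$; equivalently, the sequence $(S_n^A\otimes S_n^B)$ exhibits $A\otimes B$ as locally approximated by subhomogeneous C*-algebras with slow dimension growth. The only technical points are the elementary-tensor approximation with the norm bookkeeping above and the standard behaviour of $\mathrm{Prim}$ and of the dimensions of irreducible representations under minimal tensor products of subhomogeneous C*-algebras, so no genuine obstacle arises.
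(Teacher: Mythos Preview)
Your proposal is correct and follows essentially the same route as the paper: reduce to elementary tensors, apply the hypothesis separately in each factor to obtain subhomogeneous subalgebras with bounded dimension ratio and large minimal representation dimension, form the tensor product, and invoke Lemma~\ref{dimR-prod} to drive the dimension ratio below~$\eps$. Your write-up is a bit more explicit than the paper's (you spell out the elementary-tensor bookkeeping and the fact that irreducibles of a tensor product of subhomogeneous algebras are tensor products of irreducibles), but the argument is the same.
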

\begin{proof}
It is enough to show that for any finite subsets $\F\subseteq A$, $\mathcal G\subseteq B$, and any $\eps\in(0, 1)$, there is a subhomogeneous C*-algebra $D$ in $A\otimes B$ such that $\mathcal F\otimes \mathcal G\subseteq_\eps D$ and $\mathrm{dimRatio}(D)<\eps$.

Without loss of generality, we may assume that each element of $\mathcal F$ and $\mathcal G$ has norm one. By the assumptions, there are subhomogeneous C*-algebras $C\subseteq A$ and $S\subseteq B$ such that
$$\mathrm{dimRatio}(C)\leq R\quad\textrm{and}\quad \F\subseteq_{\frac{\eps}{4}} C,$$
and
$$\mathrm{dimRatio}(S)\leq R\quad\textrm{and}\quad \mathcal G\subseteq_{\frac{\eps}{4}} S,$$
and, furthermore, the dimension of each irreducible representation of $C$ or $S$ is at least $\frac{2R}{\eps}$. Then consider the C*-algebra
$$D:=C\otimes S.$$

By Lemma \ref{dimR-prod}, 
$$\mathrm{dimRatio}(D)\leq \eps.$$
A straightforward calculation also shows that
$$\mathcal F\otimes\mathcal G\subseteq_{\eps} D,$$
and this finishes the proof.
\end{proof}

\begin{prop}\label{prod-Lalg}
Let $(X_1, \sigma_1)$ and $(X_2, \sigma_2)$ be minimal systems, where $X_1$ and $X_2$ are infinite. Fix $y_1\in X_1$ and $y_2\in X_2$, and consider the large sub-C*-algebras $$A_{y_1}\subseteq \mathrm{C}(X_1)\rtimes_{\sigma_1}\Int \quad \textrm{and}\quad A_{y_2}\subseteq \mathrm{C}(X_2)\rtimes_{\sigma_2}\Int.$$
Then $$A_{y_1}\otimes A_{y_2}\cong (A_{y_1}\otimes A_{y_2})\otimes\mathcal Z.$$
\end{prop}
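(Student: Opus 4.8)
The plan is to combine the approximation result of Theorem~\ref{lc-app-mdA} with the permanence properties of centrally large subalgebras and the tensor-product dimension-ratio estimates of Lemmas~\ref{disj-prod}--\ref{prod-sgd}, so as to deduce that $A_{y_1}\otimes A_{y_2}$ is an approximately subhomogeneous algebra with slow dimension growth, and then invoke the Cuntz-semigroup comparison used in the proof of Theorem~\ref{Z-stb}.

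First I would show that each of $A_{y_1}$ and $A_{y_2}$ satisfies the hypotheses of Lemma~\ref{prod-sgd}. Fix $i\in\{1,2\}$, a finite subset $\mathcal F\subseteq A_{y_i}$, and $\eps>0$. Since $A_{y_i}$ is the inductive limit of the subalgebras $A_{Y}$ as $Y$ shrinks to $\{y_i\}$, and each such $A_Y$ is generated by elements of the form $f$ and $ug$ with $g$ vanishing near $y_i$, I may assume $\mathcal F$ lies (up to $\eps/2$) among generators of this form. Now apply Theorem~\ref{lc-app-mdA}: it produces a single $R=R(\mathcal F,\eps)>0$ such that for \emph{every} $J\in\mathbb N$ there is a closed neighborhood $Y=Y_J$ of $y_i$ and a subhomogeneous $S=S_J\subseteq A_{Y_J}\subseteq A_{y_i}$ with $\mathrm{dimRatio}(S_J)\le R$, with $\mathcal F\subseteq_\eps S_J$, and with every irreducible representation of $S_J$ of dimension at least $J$. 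Taking $J=n$ gives the sequence $(S_n)$ required in Lemma~\ref{prod-sgd}, with $d_n\ge n\to\infty$. Hence $A_{y_1}$ and $A_{y_2}$ both meet the hypotheses of Lemma~\ref{prod-sgd}, and that lemma yields that $A_{y_1}\otimes A_{y_2}$ is locally approximated by subhomogeneous C*-algebras with slow dimension growth.

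Next I would run the $\mathcal Z$-absorption argument exactly as in Theorem~\ref{Z-stb}, now applied to the algebra $B:=A_{y_1}\otimes A_{y_2}$ in place of $A_y$. Since $B$ is locally approximately subhomogeneous with slow dimension growth (equivalently, locally approximated by subhomogeneous algebras of arbitrarily small dimension ratio, after absorbing the bounded constant into the growing dimensions), Lemma~5.8 and Lemma~5.10 of \cite{Niu-MD} give $\mathrm{Cu}(B)\cong\mathrm{Cu}(B\otimes\mathcal Z)$; the algebra $B$ is simple (tensor product of simple nuclear C*-algebras), separable, unital, nuclear, and stably finite (being AH-approximated), so Corollary~7.4 of \cite{Winter-Z-stable-02} applies and gives $B\cong B\otimes\mathcal Z$. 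This proves the Proposition. One should also note that $B$ being locally ASH with slow dimension growth is needed in the form required by those lemmas; the passage from ``dimension ratio bounded by $R$ with representation dimensions $\ge J$'' to ``dimension ratio $\le R/J$ small'' is precisely Lemma~\ref{dimR-prod}, already packaged inside Lemma~\ref{prod-sgd}.

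The main obstacle is the uniformity built into Theorem~\ref{lc-app-mdA}: without the mean-dimension-zero hypothesis one cannot make the dimension ratio of a single $S$ small, so the key point is that tensoring two such algebras with matching \emph{large} minimal representation dimension divides the (bounded) ratios by that dimension --- this is exactly why Lemma~\ref{disj-prod} (disjointness of the strata $\mathrm{Prim}_m(C)\times\mathrm{Prim}_n(S)$ in $\mathrm{Prim}_d(C\otimes S)$) is essential, since it lets one compute $\mathrm{dim}(\mathrm{Prim}_d(C\otimes S))$ stratum by stratum rather than having the strata pile up. I would be careful to verify that the generating sets of the two tensor factors are handled independently (so that the ``finite subset of $A$ or $B$'' clause in Lemma~\ref{prod-sgd} is genuinely what is available from $A_{y_1}$ and $A_{y_2}$ separately), and that simplicity and stable finiteness of $A_{y_1}\otimes A_{y_2}$ are in place before quoting \cite{Winter-Z-stable-02}; both follow from the structure of the $A_{y_i}$ as centrally large subalgebras of the simple crossed products, but they must be stated.
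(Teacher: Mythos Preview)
Your proposal is correct and follows essentially the same approach as the paper: apply Theorem~\ref{lc-app-mdA} to verify the hypotheses of Lemma~\ref{prod-sgd} for each $A_{y_i}$, conclude that $A_{y_1}\otimes A_{y_2}$ is locally approximated by subhomogeneous algebras with slow dimension growth, and then deduce $\mathcal Z$-stability via the same Cuntz-semigroup argument (Lemmas~5.8, 5.10 of \cite{Niu-MD} and Corollary~7.4 of \cite{Winter-Z-stable-02}) used in Theorem~\ref{Z-stb}. The paper's own proof is a one-line citation of Theorem~\ref{lc-app-mdA} and Lemma~\ref{prod-sgd}; you have simply unpacked the details that the paper leaves implicit.
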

\begin{proof}
By Theorem \ref{lc-app-mdA} and Lemma \ref{prod-sgd}, the C*-algebra $A_{y_1}\otimes A_{y_2}$ is locally approximated by subhomogeneous C*-algebras with slow dimension growth, and therefore (with the same argument as that of Theorem \ref{Z-stb}) it absorbs the Jiang-Su algebra tensorially.
\end{proof}

\begin{thm}\label{prod-prod}
Let $(X_1, \sigma_1)$ and $(X_2, \sigma_2)$ be minimal systems, where $X_1$ and $X_2$ are infinite metrizable compact  spaces. Consider the crossed-product C*-algebras $$A_1=\mathrm{C}(X_1)\rtimes_{\sigma_1}\Int \quad \textrm{and}\quad A_{2}= \mathrm{C}(X_2)\rtimes_{\sigma_2}\Int.$$
Then $$A_{1}\otimes A_{2}\cong (A_{1}\otimes A_{2})\otimes\mathcal Z.$$
In particular, the (crossed product) C*-algebra of a single minimal homeomorphism has Toms growth rank (\cite{Toms-GR}) at most two. 
\end{thm}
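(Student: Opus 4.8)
The plan is to deduce the theorem from Proposition \ref{prod-Lalg} by pushing $\mathcal Z$-stability from the large subalgebras up to the crossed products, using the permanence properties of centrally large subalgebras; the growth-rank assertion then follows formally.

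First I would assemble the standing structural facts. Since each $\sigma_i$ is minimal and each $X_i$ infinite, $A_i$ is simple; it is also separable, unital, nuclear (as $\mathbb Z$ is amenable) and infinite dimensional, and hence so is $A_1\otimes A_2$ (the tensor norm being immaterial by nuclearity). Moreover $A_{y_i}$ is nuclear: by Theorem \ref{Lin-Sub} it is the inductive limit of the subhomogeneous --- hence type I, hence nuclear --- C*-algebras $A_Y$, so $A_{y_1}\otimes A_{y_2}$ (minimal tensor product) is nuclear as well. By the theorem of Archey and Phillips, $A_{y_i}$ is a unital centrally large sub-C*-algebra of $A_i$, so by Theorem \ref{CLalg-prod} the algebra $A_{y_1}\otimes A_{y_2}$ is a unital, nuclear, centrally large sub-C*-algebra of $A_1\otimes A_2$.

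Next I would invoke the work already done: Proposition \ref{prod-Lalg} gives $A_{y_1}\otimes A_{y_2}\cong(A_{y_1}\otimes A_{y_2})\otimes\mathcal Z$. Feeding this together with the previous paragraph into the permanence result of Archey and Phillips --- if $B\subseteq A$ is a nuclear centrally large sub-C*-algebra and $B\cong B\otimes\mathcal Z$, then $A\cong A\otimes\mathcal Z$ --- yields $A_1\otimes A_2\cong(A_1\otimes A_2)\otimes\mathcal Z$, which is the main assertion. Since all the analytic substance (the local approximation of $A_y$ by subhomogeneous algebras of controlled dimension ratio with arbitrarily large minimal matrix size in Theorem \ref{lc-app-mdA}, the dimension-ratio estimate for tensor products in Lemmas \ref{disj-prod} and \ref{dimR-prod}, the slow-dimension-growth conclusion in Lemma \ref{prod-sgd}, and the $\mathcal Z$-stability of Proposition \ref{prod-Lalg}) is already in place, the only care needed here is the bookkeeping around nuclearity and tensor norms, and I do not expect a genuine obstacle.

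For the final assertion, specialize to $X_1=X_2=X$ and $\sigma_1=\sigma_2=\sigma$ and set $A=\mathrm{C}(X)\rtimes_\sigma\mathbb Z$. The first part gives that $A\otimes A$ absorbs $\mathcal Z$; as $A\otimes A$ is simple, separable, unital and nuclear, it has Toms growth rank $1$, and by the behaviour of the growth rank under tensor powers established in \cite{Toms-GR} --- the growth rank of a C*-algebra being at most $n$ once its $n$-fold minimal tensor power is $\mathcal Z$-stable --- the case $n=2$ gives $\mathrm{gr}(A)\le 2$. Together with the lower bounds for the examples of Giol and Kerr from \cite{GK-Dyn}, this recovers, as noted in the introduction, that those examples are prime among the C*-algebras of minimal homeomorphisms. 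The one external statement whose precise form I would want to double-check is this growth-rank inequality from \cite{Toms-GR}; everything else is an assembly of results already in hand.
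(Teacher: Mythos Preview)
Your proposal is correct and follows exactly the same route as the paper: invoke Proposition~\ref{prod-Lalg} for $\mathcal Z$-stability of $A_{y_1}\otimes A_{y_2}$, use Theorem~\ref{CLalg-prod} to see this tensor product is centrally large in $A_1\otimes A_2$, and apply the Archey--Phillips permanence result. Your added bookkeeping on nuclearity and the explicit unpacking of the growth-rank claim are helpful elaborations, but the argument is the paper's.
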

\begin{proof}
By Proposition \ref{prod-Lalg}, the C*-algebra $A_{y_1}\otimes A_{y_2}$ absorbs the Jiang-Su algebra $\mathcal Z$. By Lemma \ref{CLalg-prod}, $A_{y_1}\otimes A_{y_2}$ is centrally large in $A_1\otimes A_2$. By Theorem \ref{A-NCP-Z} (see 2.3 of \cite{A-NCP-LAlg-Z}), the nuclear C*-algebra $A_1\otimes A_2$ absorbs the Jiang-Su algebra.
\end{proof}

\begin{cor}
Let $(X_1, \sigma_1)$ and $(X_2, \sigma_2)$ be minimal systems, where $X_1$ and $X_2$ are infinite metrizable compact spaces. Consider the crossed-product C*-algebras $$A_1=\mathrm{C}(X_1)\rtimes_{\sigma_1}\Int \quad \textrm{and}\quad A_{2}= \mathrm{C}(X_2)\rtimes_{\sigma_2}\Int.$$
Then $A_{1}\otimes A_{2}$ is classifiable and is an ASH algebra.
\end{cor}
\begin{proof}
By \cite{ENST-ASH}, the algebras $A_{1}$ and $A_{2}$ are rationally locally approximately subhomogeneous, and therefore $A_1\otimes A_2$ is rationally locally approximately subhomogeneous. By Theorem \ref{prod-prod}, $A_1\otimes A_2$ is Jiang-Su stable, and hence has finite decomposition rank by \cite{ENST-ASH} again and is classifiable by \cite{EN-K0-Z} and \cite{EGLN-Dr} (alternatively, the classifiability of $A_1\otimes A_2$ follows from \cite{EGLN-ASH}---which of course also uses \cite{ENST-ASH}).
\end{proof}

\begin{rem}
Note that, with $A_1$ and $A_2$ as in \ref{prod-prod}, $$A_1\otimes A_2\cong \mathrm{C}(X_1\times X_2)\rtimes_{\sigma}\Int^2,$$
where the action $\sigma$ of $\Int^2$ on $X_1 \times X_2$ is the product action: $$\sigma_{(m, n)}(x_1, x_2)=(\sigma_1^m(x_1), \sigma_2^n(x_2)).$$ Thus, Theorem \ref{prod-prod} states that for minimal actions of $\mathbb Z$ on $X_1$ and $X_2$ the crossed product C*-algebra $\mathrm{C}(X_1\times X_2)\rtimes_{\sigma}\Int^2$ always absorbs the Jiang-Su algebra.

On the other hand, the minimal $\Int^2$-system $(X_1\times X_2, \sigma)$ has mean dimension zero, as shown below. Therefore, Theorem \ref{prod-prod} is evidence that the C*-algebra of a minimal free action of $\Int^n$ (or an arbitrary amenable group) with mean dimension zero should be classifiable.

Let us show that $(X_1\times X_2, \sigma)$ has mean dimension zero:
Let $\alpha$ be an open cover of $X_1\times X_2$. Pick open covers $\beta_1$, $\beta_2$ of $X_1$, $X_2$, respectively, such that $$\beta:= \{U\times V: U\in\beta_1,\ V\in\beta_2 \}=\pi_1^{-1}(\beta_1)\vee\pi_2^{-1}(\beta_2)$$ is a refinement of $\alpha$, where $\pi_i: X_1\times X_2 \to X_i$, $i=1, 2$, is the projection map.


For each $M, N\in\mathbb N$, consider $$F_{M, N}:=\{(m, n): 0\leq m\leq M-1, 0\leq n\leq N-1\}.$$
Note that for any $0\leq m< M$ and $0\leq n< N$, one has
$$\sigma_{(m, n)}(\beta) = \{\sigma_1^m(U)\times \sigma_2^n(V): U\in\beta_1,\ V\in\beta_2 \}=\pi^{-1}_1(\sigma_1^m(\beta_1))\vee\pi^{-1}_2(\sigma_2^n(\beta_2)),$$
and
\begin{eqnarray*}
\bigvee_{(m, n)\in F_{M, N}}\sigma^{(m, n)}(\beta) &=& \bigvee_{m=0}^{M-1}\bigvee_{n=0}^{N-1} (\pi_1^{-1} (\sigma_1^m(\beta_1))\vee\pi_2^{-1}(\sigma_2^n(\beta_2))) \\
&=& (\bigvee_{m=0}^{M-1} \pi_1^{-1}(\sigma_1^m(\beta_1)))\vee(\bigvee_{n=0}^{N-1} \pi_2^{-1}(\sigma_2^n(\beta_2))).
\end{eqnarray*}
Therefore
\begin{eqnarray*}
\mathcal D(\bigvee_{(m, n)\in F_{M, N}}\sigma^{(m, n)}(\beta)) & = & \mathcal D((\bigvee_{m=1}^M \pi_1^{-1}(\sigma_1^m(\beta_1)))\vee(\bigvee_{n=1}^N \pi_2^{-1}(\sigma_2^n(\beta_2))))\\
&\leq & \sum_{m=0}^{M-1} \mathcal D( \pi_1^{-1}(\sigma_1^m(\beta_1))) + \sum_{n=0}^{N-1} \mathcal D( \pi_2^{-1}(\sigma_1^n(\beta_2))) \\
&\leq & M \mathcal D(\beta_1) +N \mathcal D(\beta_2),
\end{eqnarray*}
and
$$\lim_{M, N\to\infty}\frac{1}{\abs{F_{M, N}}}\mathcal D(\bigvee_{(m, n)\in F_{M, N}}\sigma^{(m, n)}(\beta))\leq \lim_{M, N\to\infty}\frac{M \mathcal D(\beta_1) +N \mathcal D(\beta_2)}{MN}=0.$$

Since $\beta$ is a refinement of $\alpha$, one has 
$$\mathcal D(\bigvee_{(m, n)\in F_{M, N}}\sigma^{(m, n)}(\alpha))\leq \mathcal D(\bigvee_{(m, n)\in F_{M, N}}\sigma^{(m, n)}(\beta))$$
and
$$\lim_{M, N\to\infty}\frac{1}{\abs{F_{M, N}}}\mathcal D(\bigvee_{(m, n)\in F_{M, N}}\sigma^{(m, n)}(\alpha))\leq \lim_{M, N\to\infty}\frac{1}{\abs{F_{M, N}}}\mathcal D(\bigvee_{(m, n)\in F_{M, N}}\sigma^{(m, n)}(\beta))=0.$$
Hence $$\mathrm{mdim}(X_1 \times X_2, \sigma)=0.$$
\end{rem}

%
%
%

%
%

\bibliographystyle{plainurl}

\end{document}